\documentclass[10pt]{amsart}
\usepackage{amsmath,amsfonts,amsthm,amssymb,amscd, soul, color}
\usepackage{geometry}
\usepackage{mathtools}
\DeclarePairedDelimiter{\ceil}{\lceil}{\rceil}

\def\classification#1{\def\@class{#1}}
\classification{\null}

\textwidth 14cm
\setlength{\leftmargin}{-.5in}
\DeclareFontFamily{OT1}{rsfs}{}
\DeclareFontShape{OT1}{rsfs}{n}{it}{<-> rsfs10}{}
\DeclareMathAlphabet{\mathscr}{OT1}{rsfs}{n}{it}


\newcommand{\R}{{\mathbb R}}
\newcommand{\Pro}{{\mathbb P}}

\newcommand{\Q}{\mathbb{Q}}

\newcommand{\C}{\mathbb{C}}
\newcommand{\F}{\mathbb{F}}
\newcommand{\E}{\mathbb{E}}

\def\u{\underline}

\def\E{\mathsf {E}}
\def\P{\mathcal {P}}

\def\i{\iota}

\newtheorem{theorem}{Theorem}

\newtheorem{lemma}[theorem]{Lemma}
\newtheorem{corollary}[theorem]{Corollary}
\theoremstyle{remark}
\newtheorem{remark}[theorem]{Remark}

\title{Point-plane incidences and some applications in positive characteristic}

\author{Misha Rudnev}
\address{Misha Rudnev, Department of Mathematics, University of Bristol,
  Bristol BS8 1TW, United Kingdom}
\email{m.rudnev@bristol.ac.uk}

\subjclass[2000]{68R05,11B75}

 \newenvironment{dedication}
        {\vspace{6ex}\begin{quotation}\begin{center}\begin{em}}
        {\par\end{em}\end{center}\end{quotation}}
\begin{document}
\begin{dedication}
\hspace{4cm}{To the memory of Galya, who would always}\\
\hspace{3cm}{ask me if I've written a paper lately.}
\vspace*{3cm}
\end{dedication}

\begin{abstract} The point-plane incidence theorem states that the number of incidences between $n$ points and $m\geq n$ planes in the projective three-space over a field $F$,  is
$$O\left(m\sqrt{n}+ m k\right),$$
where $k$ is the maximum number of collinear  points, with the extra condition $n< p^2$ if $F$ has characteristic $p>0$. 
This theorem also underlies a state-of-the-art Szemer\'edi-Trotter type bound for point-line incidences in $F^2$, due to  Stevens and de Zeeuw. 

This review focuses on some recent, as well as new, applications of these bounds that lead to progress in several open geometric questions in $F^d$, for $d=2,3,4$. These are the problem of the minimum number of distinct nonzero values of a non-degenerate bilinear form on a point set in $d=2$, the analogue of the Erd\H os distinct distance problem in $d=2,3$ and additive energy estimates for sets, supported on a paraboloid and sphere in $d=3,4$. It avoids discussing sum-product type problems (corresponding to the special case of incidences with Cartesian products), which have lately received more attention.
\end{abstract}

\maketitle

\section{Introduction} 

This paper is centred around the author's point-plane incidence theorem -- the forthcoming Theorem \ref{mish}  \cite[Theorem 3]{Rud} -- in $\Pro^3$, the projective three-space over a field $F$. The notation  $\Pro^3$ will usually appear on its own; over which field it is meant should be clear from the context. In the case when $F=\R$, the reals, as well as the complex field $\C$, somewhat stronger theorems than Theorem \ref{mish} are known, see e.g. \cite{BK}, \cite{ET}.  Hence, one may implicitly assume that $F$ has a large, and therefore odd positive characteristic $p$, which serves as an asymptotic parameter. Since the applicability of the theorem is constrained in terms of $p$, it will often be the case that $F=\F_p$, the prime residue field. 

The standard asymptotic symbols $\gg,\ll,\sim$ are used throughout to subsume absolute constants in inequalities or approximate equalities, as well as, respectively, the symbols $\Omega,O, \Theta$. The symbols $\gtrsim,\lesssim$ also suppress functions growing slower than any power of an asymptotic parameter in inequalities -- which parameter it is should clear from the context.

\medskip
The point-plane theorem is the following statement.

\begin{theorem}  \label{mish} Let $Q, \Pi$ be, respectively, finite sets of points and planes in  $\Pro^3$, with cardinalities $|Q|\leq |\Pi|$, and
$I(Q,\Pi):=\{(q,\pi)\in Q\times \Pi:\,q\in\pi\}$ -- the set of their incidences. If $F$ has positive characteristic $p$, assume $|Q|< p^2$. Let $k$ be the maximum number of collinear points in $Q$. Then
\begin{equation}\label{pups}
|I(Q,\Pi)| \ll |\Pi|(\sqrt{|Q|}+ k).\end{equation}
\end{theorem}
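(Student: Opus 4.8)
The plan is to square the incidence count by Cauchy--Schwarz, reinterpret the resulting quantity as a point--line incidence count in the dual $\Pro^3$, and resolve the latter on the Klein quadric, where it becomes an algebraic point-counting problem over $F$.

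First I would apply Cauchy--Schwarz in the plane variable. From $|I(Q,\Pi)|=\sum_{\pi\in\Pi}|Q\cap\pi|$ one gets $|I(Q,\Pi)|^2\le |\Pi|\sum_{\pi\in\Pi}|Q\cap\pi|^2$, and the sum on the right counts ordered triples $(q,q',\pi)$ with $q,q'\in\pi$. Separating the diagonal $q=q'$, which contributes exactly $|I(Q,\Pi)|$, reduces everything to bounding $R:=\sum_{\pi\in\Pi}|Q\cap\pi|(|Q\cap\pi|-1)$. Indeed, once $R\ll|\Pi|(|Q|+k^2)$ is established, the quadratic inequality $|I(Q,\Pi)|^2\le|\Pi|(|I(Q,\Pi)|+R)$ yields $|I(Q,\Pi)|\ll|\Pi|+|\Pi|\sqrt{|Q|+k^2}$, which is exactly \eqref{pups} since $\sqrt{|Q|+k^2}\le\sqrt{|Q|}+k$. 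So the entire weight of the theorem sits in the bound on $R$, and I would not throw away the $\sqrt{|Q|}$ term by crudely estimating a factor $|Q\cap\pi|-1$ by $k$.

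Next I would dualize to expose $R$ as a point--line incidence count. Grouping the triples by the line $\ell$ through the pair $(q,q')$ gives $R=\sum_{\ell}n_\ell(n_\ell-1)m_\ell$, where $n_\ell:=|Q\cap\ell|$ and $m_\ell$ is the number of planes of $\Pi$ through $\ell$; the constraint $n_\ell\le k$ is what eventually produces the additive $k^2$ term. Sending each plane $\pi$ to its dual point $\pi^\ast$ and each line $\ell$ to the pencil of planes through it --- a line $\ell^\ast$ in the dual $\Pro^3$ --- turns $m_\ell$ into $|\Pi^\ast\cap\ell^\ast|$. Thus $R$ is a weighted count of incidences between the $|\Pi|$ points $\Pi^\ast$ and the lines $\ell^\ast$ dual to the lines spanned by $Q$, with weights $n_\ell(n_\ell-1)$, and the collinearity bound $k$ now controls how many of these dual lines pass through a common point.

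The hard part is the resulting point--line incidence estimate in $\Pro^3$ over a general field $F$; over $\R$ this is the Guth--Katz regime handled by polynomial partitioning, which is unavailable here. My approach would be the Pl\"ucker/Klein correspondence: lines of the dual $\Pro^3$ become points of the Klein quadric $\mathcal{K}\subset\Pro^5$, while the condition of passing through a fixed point $\pi^\ast$ becomes incidence with a fixed $2$-plane ruling $\mathcal{K}$. The incidence count then collapses to estimating how many points of a prescribed set on $\mathcal{K}$ can lie on many such rulings --- a bilinear, essentially rank-based count that can be carried out by elementary algebra over any $F$. This is the step that forces the hypothesis $|Q|<p^2$: it is the threshold below which $Q$ cannot saturate a subplane isomorphic to $\F_p^2$, on which the estimate genuinely degenerates, and it keeps the Frobenius-stable configurations on $\mathcal{K}$ from overwhelming the count. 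Assembling the Klein-quadric bound back through the dualization and the Cauchy--Schwarz step then gives $R\ll|\Pi|(|Q|+k^2)$ and hence \eqref{pups}.
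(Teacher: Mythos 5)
Your outline diverges from the paper's proof at the outset (the paper uses no Cauchy--Schwarz or dualization: the incidence count is identified \emph{directly} with the number of intersecting pairs among $|Q|+|\Pi|$ lines obtained by mapping points and planes to the two families of $2$-planes ruling the Klein quadric $\mathfrak K\subset\Pro^5$ and cutting by a generic hyperplane $H$), but the real problem is that your final step hides the entire content of the theorem. You assert that the resulting point--line incidence problem in $\Pro^3$ over a general $F$ ``collapses to a bilinear, essentially rank-based count that can be carried out by elementary algebra.'' It does not. A Guth--Katz-strength bound on pairwise intersections of lines in a three-dimensional quadric is exactly what must be proved, and it requires the polynomial method: trapping the lines in an algebraic surface of degree $O(\sqrt{|Q|})$, decomposing into ruled and non-ruled components, and bounding the lines on the non-ruled part by the Monge--Salmon theorem. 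That theorem holds in characteristic $p$ only for surfaces of degree $<p$, and \emph{this} is the true source of the hypothesis $|Q|<p^2$ --- not the ``saturating a subplane isomorphic to $\F_p^2$'' heuristic you give, which is not how the constraint enters.

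There are also concrete defects in the reduction itself. Your claim that ``the collinearity bound $k$ now controls how many of these dual lines pass through a common point'' is false: dual lines $\ell^*$ concurrent at $\pi^*$ correspond to lines $\ell$ contained in the plane $\pi$, and a single plane can carry $\Theta(|Q|^2)$ lines spanned by $Q$ even when $k=2$; the parameter $k$ only bounds the weights $n_\ell$, not the concurrency or coplanarity degeneracies that any three-dimensional incidence theorem must control. Consequently the target $R\ll|\Pi|\left(|Q|+k^2\right)$ is a second-moment statement that does not follow from any incidence bound you have set up, and is in any case no easier than the first-moment bound \eqref{pups} you started from. The workable route is the direct one sketched in the paper: incidences equal intersecting $\alpha$--$\beta$ line pairs in $\mathfrak K\cap H$, the generic choice of $H$ enforces the no-three-concurrent hypothesis, and a bipartite version of Theorem \ref{gkt} (with the $k$ term accounting for lines lying in a two-quadric $\mathfrak K\cap H\cap H'$, i.e.\ points and planes incident to a common physical line) finishes the argument.
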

The statement of the theorem can be reversed in an obvious way, using duality in the case when the number of points exceeds the number of planes. Moreover, owing to linearity of the main estimate in $|\Pi|$, $\Pi$ can be a multiset, as long as its cardinality as a set is $\Omega(|Q|)$.

\subsection*{Pedigree}
Results discussed below can be viewed as part of the recent landscape change that has affected the status of many questions in arithmetic and geometric combinatorics. What is behind it has been commonly referred to as the {\em Polynomial method}, with its breakthrough development by Guth and Katz, in particular in their remarkable paper \cite{GK}, which resolved the long-standing Erd\H os distinct distance conjecture in $\R^2$.

The latter paper developed two important theorems, bounding the number of pair-wise intersections of lines in three dimensions, subject to some natural constraints. The First Guth-Katz theorem \cite[Theorem 2.10]{GK}, is in essence algebraic. It adopts the polynomial method in a way somewhat similar to the groundbreaking work of Dvir \cite{D}, and then proceeds by taking advantage of basic properties of ruled surfaces in $\mathbb C^3$. The theory of ruled surfaces can be viewed as one of the foundations of what today may be referred to as  the ``XIX century algebraic geometry'', which Guth and Katz succeeded in rediscovering and relating to discrete geometry questions of today. Other results, such as Theorem \ref{mish}, as well as recent novel developments in incidence theory by, e.g., Sharir and Solomon \cite{SS} have also benefitted by such rediscovery.


The proof of the Second Guth-Katz theorem \cite[Theorem 2.11]{GK} offered the method of polynomial partitioning of the real space, based on the Borsuk-Ulam theorem. Partitioning has been a strategy of choice to approach many real discrete geometry questions, going back at least as far as the vintage proofs of the Szemer\'edi-Trotter theorem \cite{ST}, \cite{CEGSW}. Polynomial partitioning enhances it with unprecedented robustness and flexibility, having generated a massive body of applications and progress towards many open discrete geometry questions in the real space -- see, e.g., \cite{Z}. One testimony to the powers of the technique is that it enables an induction proof of a slightly weaker version of the First Guth-Katz theorem over the reals \cite{Guth}. Nonetheless, being specific for reals, polynomial partitioning is not discussed here any further.

It is the First Guth-Katz theorem that is a key {\em Ursprung} of the results in this review. Even though the original \cite[Proof of Theorem 2.10]{GK} took place in $\mathbb R^3$, it became agreed in  the folklore that the proof should work, with some constraints, over a general field. The first ``official'' account of this was given  by Ellenberg and Hablicsek \cite{EH} in late 2013, followed by Koll\'ar \cite{Ko} and the author \cite{Rud} in 2014. The latter two had been aware of a 2003 paper by Voloch \cite{Vo}, which discussed the constraints under which the key element of the First Guth-Katz theorem proof, the Monge-Salmon theorem \cite{Sa}, applied in positive characteristic.


\begin{theorem}[First Guth-Katz theorem] \label{gkt} Let $L$ be a set of lines in $\C^3$. Suppose,  no more then two lines are concurrent. Then the number of pair-wise intersections of lines in $L$ is 
$$
O\left(|L|^{\frac{3}{2}}+ |L|k\right),
$$
where $k$ is the maximum number of lines, contained in a plane or  ruled quadric.
\end{theorem}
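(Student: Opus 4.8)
The plan is to run the polynomial method in $\C^3$, reducing the global incidence count to a local study of lines lying on a single algebraic surface of controlled degree. Write $N:=|L|$ for the number of lines and let $P_2$ denote the number of pairwise intersection points; since no three lines are concurrent, each such point accounts for exactly one pair, so $P_2$ is precisely the quantity to be bounded. First I would prune: repeatedly discard any line meeting fewer than $C\sqrt{N}$ of the others, for a suitable absolute constant $C$. This removes at most $CN^{3/2}$ intersection points in total, so it suffices to bound $P_2$ for the surviving family, in which \emph{every} line carries more than $C\sqrt{N}$ intersection points. A dimension count — a form of degree $d$ in three variables has $\sim d^3/6$ coefficients, while vanishing on a line imposes $d+1$ linear conditions — produces a nonzero polynomial $P$ of degree $d \ll \sqrt{N}$ vanishing on all surviving lines; replacing $P$ by its radical we may assume it is square-free. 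Choosing $C$ large enough that each surviving line contains more than $d$ of its intersection points, the restriction of $P$ to such a line is a univariate polynomial of degree $\le d$ with more than $d$ zeros, hence vanishes identically. Thus every surviving line lies on the surface $Z=\{P=0\}$.

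The heart of the matter is then to bound the intersections of lines lying on $Z$ componentwise. Decompose $Z$ into irreducible components $Z_1,\dots,Z_r$ of degrees $\delta_1,\dots,\delta_r$ with $\sum_i \delta_i = d$. The decisive algebraic input is the flecnode polynomial together with the Monge--Salmon theorem \cite{Sa}: for an irreducible surface the flecnode polynomial, of degree $O(\delta_i)$, vanishes identically on the surface if and only if the surface is ruled. For a \emph{non-ruled} component $Z_i$ the flecnodal locus is therefore a genuine curve, of degree $O(\delta_i^2)$ by B\'ezout, and every line on $Z_i$ — being flecnodal at each of its points — is a component of this curve; a short argument then bounds the number of intersection points contributed by lines on $Z_i$ by $O(\delta_i^3)$. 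Using $\sum_i \delta_i = d$ and the elementary inequality $\sum_i \delta_i^3 \le \big(\sum_i \delta_i\big)^3 = d^3 \ll N^{3/2}$, the total contribution of non-ruled components is $O(N^{3/2})$, as required.

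It remains to treat the ruled components, and here the classical structure theory of ruled surfaces decides which surfaces are genuinely dangerous. A ruled surface of degree $\delta_i \ge 3$ carries a unique family of generators, a generic pair of which is skew; two generators can meet only along the singular locus, and any further lines on the surface are finite in number. Combined with the no-three-concurrent hypothesis — which caps the number of our lines through any single point at two — this again confines the resulting intersection points to $O(\delta_i^3)$, so these components also fold into the $O(N^{3/2})$ term. The truly extremal surfaces are the \emph{planes} and the \emph{reguli} (doubly ruled quadrics): on a plane every two of its lines meet, and on a regulus every generator of one family meets every generator of the other, so a single such surface carrying $k$ of our lines can produce up to $\sim k^2$ intersection points. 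Each of these surfaces contains at most $k$ lines by hypothesis, and since two lines meeting at a point are coplanar — hence lie in a unique common plane and in only boundedly many common reguli — each intersection point is charged to only $O(1)$ special components; summing the $\binom{k}{2}$-type contributions while controlling the total number of (line, special-surface) incidences yields the $O(Nk)$ term. Adding the three contributions gives $P_2 \ll N^{3/2} + Nk$, which is the claimed bound.

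I expect the main obstacle to be the ruled-surface analysis of the previous paragraph: proving rigorously that every irreducible surface other than a plane or a regulus supports only $O(\delta^3)$ intersection points of its lines, which requires the Monge--Salmon/flecnode machinery together with a careful case analysis of generators, the singular locus, and the finitely many exceptional lines. A secondary but genuine difficulty is the bookkeeping for the $Nk$ term — ensuring that the passage from the $O(\delta_i^2)$ lines on the special components to an honest $O(Nk)$ bound does not lose a spurious factor through lines that lie on many components simultaneously. By contrast, working over $\C$ keeps the Monge--Salmon theorem in its classical form, so the characteristic-$p$ subtleties identified by Voloch \cite{Vo}, which constrain the analogous statement behind Theorem \ref{mish}, do not intervene here.
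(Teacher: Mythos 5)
This theorem is not proved in the paper: it is quoted from Guth and Katz \cite[Theorem 2.10]{GK}, with only a one-line indication of the method (a Dvir-style polynomial argument followed by the classical theory of ruled surfaces and the Monge--Salmon/flecnode theorem \cite{Sa}). Your sketch reproduces exactly that strategy --- pruning, a degree-$O(\sqrt{|L|})$ vanishing polynomial, componentwise analysis with planes and reguli supplying the $|L|k$ term --- so it follows the same route as the cited source, and the points you flag as the main obstacles (the singly ruled components, and the bookkeeping that upgrades the flecnodal-curve bound of $O(\delta_i^2)$ lines on a non-ruled component to an $O(\delta_i^3)$ count of intersection points rather than the naive $O(\delta_i^4)$) are precisely where the substantive work in \cite{GK} is done.
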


The first major step of the Guth-Katz proof of the Erd\H os distinct distance was due to Elekes and Sharir \cite{ES}. Following Elekes' {\em Budapester Program} \cite{El}, they interpret the number of pairs of congruent segments with endpoints in a plane point set as the number of pair-wise intersections of lines in $\R^3$. Indeed, two segments have the same length if and only if one can be moved to another by a rigid motion from the Special Euclidean Group $SE_2$, and the set of all group elements, taking one endpoint to the other, is geometrically a line in the three-dimensional space $SE_2\subset \Pro^3$. 

The polynomial method would then trap a large number of lines to lie in a fairly low degree algebraic surface. Given a (complex) algebraic surface of degree $\geq 2$, the fact that there are two lines, contained in the surface and intersecting at some point on the surface does not tell one much about this point (the two lines would coincide with asymptotic lines at this point); but points on the surface, where three of more lines meet must lie on a lower-dimensional subvariety. This is why Guth and Katz had to consider Theorem \ref{gkt} as a separate scenario of their general line-line incidence theorem in $\R^3$.

However, if one just thinks of Theorem \ref{gkt} as an incidence theorem, where can the set of lines  $L$, satisfying its apparently stringent no three-concurrency assumption come from? A heuristic (and retrospective) answer would be -- when $L$ can be mapped to some three-dimensional subvariety of the four-dimensional space of lines in $\Pro^3$, also known as the Pl\"ucker-Klein (or just Klein) quadric $\mathfrak K\subset \Pro^5$. The rich theory of the Pl\"ucker-Klein quadric originated in \cite{Pl}, for modern exposition see, e.g. \cite{PW}, \cite{JS}. 

The space $\Pro^3$ and its dual are certainly three-dimensional, so all it takes is to map them into $\mathfrak K$ in the right way, and there is a natural way of doing this, since ``physical'' points and planes in $\Pro^3$ correspond to two canonical rulings of $\mathfrak K$ by two-planes.  In fact, Theorem \ref{mish} came about from studying the three-dimensional variety of lines in the group $SL_2$, which geometrically is a transverse intersection of $\mathfrak K$ by a hyperplane in $\Pro^5$.  Hence, Theorem \ref{mish} can be recast as a line-line incidence in $SL_2$, see Corollary \ref{sl2lines} below. The question came about in the attempt to produce an erratum to a claim in \cite{IRR}, which applied the Guth-Katz approach to the Erd\H os distance problem in $\R^2$ to a similar-sounding question of what is the minimum number of  areas of triangles, rooted at a fixed origin, the other two vertices lying in a non-collinear set  of $n$ points in $\R^2$. The conjecture that this number is roughly $n$ -- modulo an absolute constant and possibly a power of $\log n$ -- is wide open; the long version of the erratum is \cite{IRRE} claims a much more modest partial result, slightly improving the bound $\Omega(n^{\frac{2}{3}})$ over the reals which follows immediately from the Szemer\'edi-Trotter theorem. On the other hand, Theorem \ref{mish} not only enables one to extend,  in generality,  the bound $\Omega(n^{\frac{2}{3}})$ to the positive characteristic case -- see Section \ref{forms} below -- but with some work (not presented here for its technical challenge)  prove a better exponent than  $\frac{2}{3}$ \cite[Theorem 4]{57}.

\subsection{Outline of the paper} The exposition proceeds with two preliminary sections, in preparation for applications in $F^d$ geometry, $d=3,4$.  The main body of Section \ref{prep} presents several more technical restatements of Theorem \ref{mish} as well as its implications for point-line incidence bounds in the plane, developed by Stevens and de Zeeuw \cite{SdZ}.
As two separate short subsections within Section \ref{prep}  (these can be skipped by a reader more interested in applications) we discuss sharpness of Theorem \ref{mish} and its corollaries and outline the main geometric idea behind the proof of the theorem.

After that Section \ref{null} addresses separately the issue of isotropic lines in $F^d,$ $d=3,4$, arising throughout the applications, except the one in Section \ref{forms}.

Section \ref{null} is followed by two sections of applications. Section \ref{erdos} deals with two outstanding Erd\H os-type questions and Section \ref{en} with energy estimates arising in the Fourier analysis perspective, although avoiding Fourier analysis per se.
\section{Other statements of Theorem \ref{mish} and point-line incidence bound}\label{prep}
There are several applications of the point-plane incidence bound when there is a set $L^*$ of ``forbidden'' lines in $\Pro^3$, incidences supported on which can be interpreted in a specific way, and therefore discounted. The purpose of this will be to lower the value of the parameter $k$ in Theorem \ref{mish}, standing for the maximum number of collinear points (planes).

Formally speaking, suppose, there is  a finite set of lines $L^*$ in $\Pro^3$. Define the restricted set of incidences between a point set $Q$ and set of planes $\Pi$ as
\begin{equation}\label{inss}
I^*(Q,\Pi) = \{(q,\pi)\in Q\times \Pi: q\in \pi \mbox{ and } \forall l\in L^*, \,q\not \in l \mbox{ or } l \not\subset\pi\}.
\end{equation}

\addtocounter{theorem}{-2}

\renewcommand{\thetheorem}{\arabic{theorem}A}
\begin{theorem} \label{mishh} Let $Q, \Pi$ be finite sets of points and planes in  $\Pro^3$, with $|Q|\leq |\Pi|$ and $|Q|< p^2$ if $F$ has positive characteristic $p$. For a finite set of lines $L^*$, let  $k^*$ be the maximum number of  points, incident to any line not in $L^*$. 

Then
\begin{equation}\label{pupss}
|I^*(Q,\Pi)| \ll |\Pi| (\sqrt{|Q|}+ k^*).\end{equation}
\end{theorem}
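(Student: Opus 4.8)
The plan is to obtain Theorem \ref{mishh} by inserting the forbidden set $L^*$ directly into the proof of Theorem \ref{mish}, of which it is the special case $L^*=\emptyset$. Write $n_\pi=|Q\cap\pi|$ for the full point-in-plane count and $n_\pi^*=|Q\cap\pi\setminus B_\pi|$ for the restricted one, where $B_\pi\subseteq Q\cap\pi$ is the set of points discounted in $\pi$, i.e.\ those lying on some line of $L^*$ contained in $\pi$. Then $|I^*(Q,\Pi)|=\sum_\pi n_\pi^*$ and, by Cauchy--Schwarz, $|I^*(Q,\Pi)|^2\le |\Pi|\sum_\pi (n_\pi^*)^2$. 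The whole task is therefore to prove the second-moment bound
\[
\sum_\pi (n_\pi^*)^2 \ll |\Pi|\,(|Q|+(k^*)^2),
\]
which is, in essence, the estimate underlying Theorem \ref{mish} (there with $n_\pi$ and $k$ in place of $n_\pi^*$ and $k^*$), and from which \eqref{pupss} follows at once.

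The elementary observation that powers the reduction is the following. Expanding the square,
\[
\sum_\pi (n_\pi^*)^2 = |I^*(Q,\Pi)| + \sum_{q\ne q'}\#\{\pi\in\Pi:\ q,q'\in Q\cap\pi\setminus B_\pi\},
\]
so the off-diagonal term counts ordered pairs of distinct points that both \emph{survive} in a common plane. If $q$ and $q'$ both survive in $\pi$, then the line $\ell=\overline{qq'}$ cannot lie in $L^*$: otherwise $\ell\in L^*$ is a forbidden line through $q$ with $\ell\subset\pi$, whence $q\in B_\pi$, a contradiction. Since a surviving pair forces $\overline{qq'}\subset\pi$, grouping the surviving pairs by the line they span shows the off-diagonal sum is at most $\sum_{\ell\notin L^*} n_\ell(n_\ell-1)m_\ell$, where $n_\ell=|Q\cap\ell|$ and $m_\ell=\#\{\pi\in\Pi:\ell\subset\pi\}$. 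Only lines $\ell\notin L^*$ occur, and each such line carries at most $k^*$ points of $Q$ by the definition of $k^*$; so this is precisely the quantity that the proof of Theorem \ref{mish} controls, but now with the richness uniformly bounded by $k^*$ instead of $k$.

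It remains to run the algebraic heart of Theorem \ref{mish} --- the part that uses the Klein quadric and the hypothesis $|Q|<p^2$ --- on this restricted line-sum. This is where I expect the real work to lie: one must verify that $k$ enters the proof of Theorem \ref{mish} \emph{only} as a uniform upper bound for the richness $n_\ell$ of the lines spanned by the counted coplanar pairs, so that discarding the forbidden lines (which only decreases every term of a nonnegative sum) and invoking $n_\ell\le k^*$ replaces $k$ by $k^*$ verbatim, with no residual dependence on the richness of the forbidden lines. Concretely, I would re-derive the bound of Theorem \ref{mish} for $\sum_{\ell}n_\ell(n_\ell-1)m_\ell$ keeping explicit track of which lines contribute, split off the genuinely rich lines exactly as in \cite{Rud}, and observe that after the restriction the maximal surviving richness is $k^*$; the richness-free main term is untouched and still yields $|\Pi|\,|Q|$.

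The one point needing care is that the restriction is imposed per plane --- a point may survive in one plane and be discounted in another --- whereas the final grouping is global, by lines. The argument of the second paragraph resolves this cleanly: the only structural fact extracted from the per-plane condition is that \emph{every} surviving coplanar pair spans a line outside $L^*$, a statement that holds simultaneously for all planes, so the subsequent reorganisation by lines is automatically consistent and the forbidden lines drop out uniformly. I therefore expect the proof to be a faithful re-run of \cite[Theorem 3]{Rud} in which the collinear (degenerate) regime is recomputed over lines not in $L^*$, leaving the generic regime, and hence the leading term $|\Pi|\sqrt{|Q|}$, unchanged.
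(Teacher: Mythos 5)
Your guiding intuition --- that $k$ enters Theorem \ref{mish} only through lines rich in points of $Q$, so that excising the incidences supported on $L^*$ should replace $k$ by $k^*$ --- is exactly the intended content of Theorem \ref{mishh}. But the concrete reduction you set up has a genuine gap. After Cauchy--Schwarz you declare that the whole task is the second-moment bound $\sum_\pi (n_\pi^*)^2 \ll |\Pi|\,(|Q|+(k^*)^2)$, asserting that its unrestricted form ``is, in essence, the estimate underlying Theorem \ref{mish}''. It is not. The proof of Theorem \ref{mish} in \cite{Rud} bounds the first moment $\sum_\pi n_\pi$ directly: under the Klein map each incidence $(q,\pi)$ becomes one intersection point of the lines $\alpha_q\cap H$ and $\beta_\pi\cap H$ in the three-quadric $\mathfrak K\cap H$, and the Guth--Katz count of pairwise line intersections is applied to that first-moment quantity. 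No estimate for $\sum_\pi n_\pi^2$ (equivalently, for the number of triples $(q,q',\pi)$ with $q,q'\in\pi$, i.e.\ paths of length two in the incidence graph) is proved there, it does not follow from \eqref{pups} by any one-line argument, and you offer no proof of it. Relatedly, your quantity $\sum_{\ell\notin L^*}n_\ell(n_\ell-1)m_\ell$ is not ``precisely the quantity that the proof of Theorem \ref{mish} controls'': summed over all lines it simply reproduces $\sum_\pi n_\pi(n_\pi-1)$, so the reduction is circular unless the second-moment bound is established by some independent means. In short, you have replaced the statement to be proved by an unproved and at least superficially stronger one.

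The detour is also unnecessary, since Theorem \ref{mishh} is itself a first-moment statement. The right move is to track where $k$ arises in the argument sketched in Section \ref{prep}: the term $|L|k$ of Theorem \ref{gkt} accounts exactly for intersections of lines lying in a common plane or doubly-ruled quadric; each such quadric is $\mathfrak K\cap H\cap T_{\mathfrak l}\mathfrak K$ for some physical line $l$, and the $\alpha$- and $\beta$-lines it contains are exactly those of the points $q\in l$ and the planes $\pi\supset l$. Hence the intersections inside the quadric indexed by $l$ are in bijection with the incidences $(q,\pi)$ satisfying $q\in l\subset\pi$, which are precisely the ones that definition \eqref{inss} discards when $l\in L^*$. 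Deleting those quadrics from the degenerate term of the Guth--Katz count leaves the maximum taken over lines not in $L^*$, i.e.\ $k^*$, while the $|\Pi|\sqrt{|Q|}$ term is untouched. Your closing observation --- that every surviving coplanar pair spans a line outside $L^*$ --- is correct and is the one ingredient needed to make this identification, but it has to be fed into the first-moment count rather than into a second moment.
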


\addtocounter{theorem}{-1}

\renewcommand{\thetheorem}{\arabic{theorem}B}

For applications over the prime residue field $\F_p$ there is the following asymptotic version. See \cite[Theorem 8]{57} and \cite[Section 3]{MuPet1} for its (easy) derivation from Theorem \ref{mish}.
\begin{theorem}
	\label{Misha+}
	Let  $Q$ be a set of points and $\Pi$ a set of planes in $\F^3_p$. Suppose that $|Q| \le |\Pi|$ and that $k$ is the maximum number of collinear points in $Q$. Then
	$$
	|I(Q,\Pi)|
	- \frac{|Q| |\Pi|}{p} \ll  |\Pi|(\sqrt{|Q|} + k) \,.
	$$
	If $L^*$ is a set of lines in $\F_p^3$ and one excludes incidences $(q,\pi)\in Q\times \Pi$, such that $q\in l\subset \pi$ for some $l$ in $L^*$, then  $k$ can be replaced by the maximum number $k^*$ of points of $Q$, supported on a line not in $L^*$. 
\end{theorem}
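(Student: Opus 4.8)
The plan is to read the estimate as a one-sided upper bound on $|I(Q,\Pi)|$ (as it is literally written) and to deduce it from the projective Theorem~\ref{mish} by a covering argument, the point being that an incidence is the same object in $\F_p^3$ and in $\Pro^3$, so the only difference from Theorem~\ref{mish} is the admissible range of $|Q|$ and the presence of a main term. First, if $|Q|<p^2$ then $|Q|/p<\sqrt{|Q|}$, so $\frac{|Q||\Pi|}{p}<|\Pi|\sqrt{|Q|}$ and the main term is swallowed by the error; the bound is then immediate from Theorem~\ref{mish}. Thus the whole content lies in the range $p^2\le|Q|\le p^3$, where Theorem~\ref{mish} cannot be applied to $Q$ directly.

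For $|Q|\ge p^2$ I would cover $Q$ by $r=\lceil 2|Q|/p^2\rceil$ pieces $Q_1,\dots,Q_r$, each of size $<p^2$, obtained by assigning the points to the pieces uniformly at random. Since $|Q_i|<p^2\le|Q|\le|\Pi|$, Theorem~\ref{mish} applies to each pair $(Q_i,\Pi)$, and summing the identity $|I(Q,\Pi)|=\sum_i |I(Q_i,\Pi)|$ gives
\[
|I(Q,\Pi)|\ll|\Pi|\Big(\sum_{i=1}^r\sqrt{|Q_i|}+\sum_{i=1}^r k_i\Big),
\]
where $k_i$ is the largest number of collinear points of $Q_i$. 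The main term comes out by Cauchy--Schwarz: $\sum_i\sqrt{|Q_i|}\le\sqrt{r\sum_i|Q_i|}=\sqrt{r|Q|}\asymp |Q|/p$, which reproduces exactly $\frac{|Q||\Pi|}{p}$.

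The crux, and the one genuinely probabilistic step, is the collinearity term $\sum_i k_i$. The naive bound $k_i\le k$ only yields $\sum_i k_i\le rk$, which is too large once $|Q|>p^2$; the saving is that a random partition spreads out any collinear configuration. Concretely, a line carrying $a_\ell\le k$ points of $Q$ meets a given piece in a $\mathrm{Binomial}(a_\ell,1/r)$ number of points, so a Chernoff bound together with a union bound over the $O(|Q|^2)$ lines shows that, with positive probability, every piece satisfies $k_i\lesssim k/r+\log|Q|$ simultaneously. Hence $\sum_i k_i\lesssim k+r\log|Q|$, and since $r\lesssim\sqrt{|Q|}$ (because $|Q|\le p^4$) this is $\lesssim k+\sqrt{|Q|}$, within the allowed error. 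I expect this spreading estimate to be the only real obstacle, and to be where a stray logarithm threatens the clean $\ll$; it should be removable by a more careful, e.g. derandomised and line-balanced, choice of the partition. Note also that the very appearance of the additive $\sqrt{|Q|}$ in the error, beyond the main term, is explained by this $r\lesssim\sqrt{|Q|}$ partition cost.

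The restricted statement is proved by running the same covering argument with Theorem~\ref{mishh} in place of Theorem~\ref{mish}: after discarding at the outset the incidences $(q,\pi)$ with $q\in l\subset\pi$ for some $l\in L^*$, each $k_i$ is replaced by the largest number of points of $Q_i$ on a line not in $L^*$, and the same spreading bound gives $\sum_i k_i\lesssim k^*+\sqrt{|Q|}$, so $k$ is replaced by $k^*$ throughout. If instead a two-sided (absolute-value) version is wanted, the covering supplies only the upper inequality, and the matching lower bound $|I(Q,\Pi)|\ge\frac{|Q||\Pi|}{p}-O(|\Pi|\sqrt{|Q|})$ would have to come from a separate, spectral (eigenvalue) input, which Theorem~\ref{mish} alone cannot provide.
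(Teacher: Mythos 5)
The paper itself offers no in-text proof of this statement, only the citations \cite[Theorem 8]{57} and \cite[Section 3]{MuPet1}, so the comparison is against the derivation those references supply. Your covering argument does yield the weaker bound $|I(Q,\Pi)|\ll |Q||\Pi|/p+|\Pi|(\sqrt{|Q|}+k)$, but it does not prove the theorem as written, and it also contains an unnecessary detour. The unnecessary part first: the Chernoff/union-bound step is not needed, because every affine line in $\F_p^3$ carries exactly $p$ points, so $k\le p$, while each piece of an equitable partition into $r=\lceil 2|Q|/p^2\rceil$ parts has $|Q_i|\sim p^2$; hence $k_i\le k\le p\ll\sqrt{|Q_i|}$ and the collinearity terms are absorbed into $\sum_i\sqrt{|Q_i|}$ outright, which also removes the stray logarithm you were worried about. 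The genuine gap is the constant in front of the main term. You get $\sum_i\sqrt{|Q_i|}\le\sqrt{r|Q|}\approx\sqrt{2}\,|Q|/p$, and Theorem \ref{mish} itself carries an unspecified implied constant $C_0$, so the covering argument produces $|I(Q,\Pi)|\le C\,|Q||\Pi|/p$ with $C=\sqrt{2}\,C_0>1$ in the regime $|Q|\ge p^2$. Consequently $|I(Q,\Pi)|-|Q||\Pi|/p\le (C-1)|Q||\Pi|/p$, and since $|Q|/p>\sqrt{|Q|}$ precisely when $|Q|>p^2$, this is \emph{not} $\ll|\Pi|(\sqrt{|Q|}+k)$. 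The point of writing the bound with the subtracted main term is that its coefficient is exactly $1$, and no covering argument can recover that: the error of Theorem \ref{mish} applied to a single piece of size $p^2$ is already of the same order as that piece's share of the main term.

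The derivation the paper points to handles the large range differently. For $|Q|<p^2$ one has $|Q|/p<\sqrt{|Q|}$, and Theorem \ref{mish} (or Theorem \ref{mishh} for the $L^*$ version) applies directly, exactly as in your first step. For $|Q|\ge p^2$ one abandons Theorem \ref{mish} altogether and uses the second-moment (Vinh/expander-mixing) bound --- the three-dimensional analogue of \eqref{insv}, which is what the Murphy--Petridis incidence identity delivers with main-term constant exactly $1$: $|I(Q,\Pi)|\le |Q||\Pi|/p+p\sqrt{|Q||\Pi|}$. Since in this range $|\Pi|\ge|Q|\ge p^2$, the error satisfies $p\sqrt{|Q||\Pi|}\le\sqrt{|\Pi|}\cdot\sqrt{|Q||\Pi|}=|\Pi|\sqrt{|Q|}$, which is precisely the claimed error term (and makes the $k$, resp.\ $k^*$, refinement relevant only in the small range, where your argument already works). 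Your closing remark that a two-sided version would need a spectral input is therefore half-right: the spectral input is already needed for the one-sided upper bound once $|Q|\ge p^2$.
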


\addtocounter{theorem}{-1}
\renewcommand{\thetheorem}{\arabic{theorem}C}

\medskip
As it often happens with incidence theorems, one may need a (less efficient) weighted version established via an easy rearrangement argument. We state one variant to be used in the sequel. To each point $q\in Q$ and each plane $\pi\in \Pi$ one assigns, respectively, positive integer weights $w(q),w(\pi)\leq w_0$, for some maximum weight $w_0$. Suppose, the total weight of both sets $Q$ and $\Pi$ equals $W$.  An incidence $q\in \pi$ contributes $w(q)w(\pi)$ to the total number of weighted incidences, denoted as $I_w$. Then one can take  a subset $Q'$ of $\ceil{W/w_0}$ points in $Q$, maximising, over all subsets of $Q$ of this size, the total weight of all planes in $\Pi$ incident to it, and then reassign to each $q\in Q'$ the maximum weight $w_0$. Let $I_w'$ be the number of weighted incidences of the plane set $\Pi$ with $Q'$ instead of $Q$. Clearly, $I'_w\geq I_w$, as well as $|Q'|\ll |\Pi|$. Hence one has the following claim.
\begin{theorem}\label{wmish} 
 Let $Q, \Pi$ be  weighted sets of points and planes  in  $\Pro^3$, both with total weight $W$. Suppose, maximum weights are bounded by $w_0\geq 1$. Let $k$ be the maximum number of collinear points, counted without weights. Suppose, $\frac{W}{w_0}< p^2$ if $p>0$ is the characteristic of $F$.
Then 
\begin{equation}\label{pupsweight}
I_w\ll W(\sqrt{w_0W}+ k w_0).\end{equation}
The same estimate holds for the quantity $I^*_w$, which discounts weighted incidences along a certain set $L^*$ of lines in $\Pro^3$, with $k$ replaced by $k^*$ -- the maximum number of points in $Q$  incident to a line not in $L^*$.
\end{theorem}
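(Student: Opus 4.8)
The plan is to reduce the weighted estimate to the unweighted bound of Theorem~\ref{mish}, peeling the weights off the two sets one side at a time and exploiting the asymmetry of \eqref{pups}: the estimate is \emph{linear} in the number of planes but only \emph{sublinear} (a square root) in the number of points. I would first dispose of the plane weights cheaply. Since the bound is linear in $|\Pi|$, I replace the weighted plane set by the multiset $\tilde\Pi$ in which each $\pi$ occurs $w(\pi)$ times, so that $|\tilde\Pi|=W$ while the number of \emph{distinct} planes is unchanged. By the multiset remark recorded immediately after Theorem~\ref{mish}, this is legitimate as long as the number of distinct planes remains $\Omega$ of the number of points; I defer this check to the end.

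The point side requires a genuine rearrangement, since one cannot simply replicate points into a multiset without inflating both the nonlinear $\sqrt{|Q|}$ term and the maximal collinearity count $k$, and without breaking the constraint $|Q|<p^2$. Writing $D(q)=\sum_{\pi\ni q}w(\pi)$ for the weighted degree of a point, one has $I_w=\sum_q w(q)D(q)$ subject to $0\le w(q)\le w_0$ and $\sum_q w(q)=W$. Regarding the $w(q)$ as variables, this is a linear functional, maximised by the greedy assignment that loads the full weight $w_0$ onto the points of largest $D$-value until the budget $W$ is exhausted, using $\lceil W/w_0\rceil$ points. Hence, if $Q'$ denotes the $\lceil W/w_0\rceil$ points of $Q$ maximising the total weight of incident planes, reassigning each the weight $w_0$ gives $I_w\le I'_w=w_0\sum_{q\in Q'}D(q)$; and $\sum_{q\in Q'}D(q)=|I(Q',\tilde\Pi)|$ is now an \emph{unweighted} incidence count between the ordinary point set $Q'$ and the multiset $\tilde\Pi$.

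It then remains to apply Theorem~\ref{mish} to $Q'$ and $\tilde\Pi$. Since $w_0\ge1$ we have $|Q'|\approx W/w_0\le W=|\tilde\Pi|$, so $|Q'|\le|\tilde\Pi|$; the bound $W=\sum_\pi w(\pi)\le w_0|\Pi|$ gives $|\Pi|\ge W/w_0\approx|Q'|$, confirming the deferred distinct-plane hypothesis; and the standing assumption $W/w_0<p^2$ is exactly $|Q'|<p^2$. As $Q'\subseteq Q$, the maximal number of collinear points in $Q'$ is at most $k$, so Theorem~\ref{mish} yields $|I(Q',\tilde\Pi)|\ll|\tilde\Pi|(\sqrt{|Q'|}+k)\ll W(\sqrt{W/w_0}+k)$. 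Multiplying by $w_0$ and using $w_0\sqrt{W/w_0}=\sqrt{w_0W}$ gives $I_w\le w_0|I(Q',\tilde\Pi)|\ll W(\sqrt{w_0W}+kw_0)$, which is \eqref{pupsweight}. The restricted statement follows verbatim upon invoking Theorem~\ref{mishh} in place of Theorem~\ref{mish}, with $k$ replaced by $k^*$.

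The one point I would be most careful about is the bookkeeping that keeps the hypotheses of Theorem~\ref{mish} intact through the two reductions. The square-root dependence on $|Q|$ forbids symmetrising the argument by also turning the points into a multiset (a double-multiset would need $|\Pi|\gtrsim W=\Omega(w_0|\Pi|)$ distinct planes, which fails for $w_0>1$). It is precisely the greedy \emph{selection} of $Q'$, rather than replication, that both lowers the effective point count to $W/w_0$ and guarantees the distinct-plane count stays $\Omega(|Q'|)$, so that the multiset form of the theorem genuinely applies.
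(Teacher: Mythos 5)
Your proposal is correct and follows essentially the same route as the paper: select the $\lceil W/w_0\rceil$ points of largest weighted plane-degree, assign each the full weight $w_0$ (which can only increase the weighted incidence count, by the greedy/rearrangement argument), and then apply Theorem~\ref{mish} to this point set against the plane multiset, using linearity in $|\Pi|$ and the observation that the number of distinct planes is at least $W/w_0\approx|Q'|$. Your write-up merely makes explicit the linear-programming justification and the hypothesis checks that the paper leaves implicit.
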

Observe that if there was an a-priori information on the distribution of weight among the points/planes, one could take it into account by dyadic partitioning and applying Theorem \ref{wmish} ``locally'' to dyadic groups, similar,  to applications of the Szemer\'edi-Trotter theorem in, e.g. \cite[Lemma 6]{IKRT}. However, such an opportunity has not come about so far in applications of the point-plane bound.

Theorem \ref{mish} has recently found many applications in sum-product type estimates in, e.g., \cite{RRS},  \cite{AMRS}, \cite{57} where the arising sets of points and planes have natural structure of Cartesian products. In particular, in \cite[Corollary 6]{AMRS}, it was observed that Theorem \ref{mish} implied a point-line incidence bound in $F^2$ in the special case of the point set being a Cartesian product. 

Stevens and de Zeeuw \cite{SdZ} derived a stronger bound in the latter case, as follows.

\addtocounter{theorem}{1}
\renewcommand{\thetheorem}{\arabic{theorem}}
\begin{theorem}[\cite{SdZ}, Theorem 4] \label{SdZcp}
Let $A,B \subset F$ with $|A| \leq |B|$ and let $L$ be a collection of lines in $F^2$, if  $F$ has positive characteristic $p>0$, assume 
$|A||L| < p^2.$

Then the set of incidences
$I(Q,L)$ between the point set $Q=A \times B\subset F^2$ and $L$ satisfies the bound
$$|I(Q, L)| \ll |A|^{3/4}|B|^{\frac{1}{2}}|L|^{3/4} +|Q|+|L|.
$$
\end{theorem}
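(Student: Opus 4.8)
The plan is to bound the incidences by a Cauchy--Schwarz step that converts them into a count of \emph{concurrent} pairs of lines, and then to realise that count as a genuine point--plane incidence in $F^3$ to which Theorem \ref{mishh} applies with the favourable gain $\sqrt{|A||L|}$.

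First I would dispose of the vertical lines $x=\mathrm{const}$: each meets $A\times B$ in at most $|B|$ points, but only those through an abscissa in $A$ carry incidences, so together they contribute $O(|Q|)$. All remaining lines I write as $y=sx+t$, with $(s,t)$ ranging over $L$. Writing $r(a,b)$ for the number of lines of $L$ through $(a,b)$, so that $|I(Q,L)|=\sum_{(a,b)}r(a,b)$, Cauchy--Schwarz gives $|I(Q,L)|^2\le |Q|\,\mathcal E$, where $\mathcal E=\sum_{(a,b)\in Q}r(a,b)^2$ counts ordered pairs $(\ell,\ell')$ of lines of $L$ concurrent at a point of $Q$. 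The diagonal $\ell=\ell'$ contributes exactly $|I(Q,L)|$, so it remains to bound the number $\mathcal E_{\neq}$ of concurrences of \emph{distinct} lines.

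The heart of the argument is the observation that two lines $y=sx+t$ and $y=s'x+t'$ meet at abscissa $a$ precisely when $a(s-s')=t'-t$, a bilinear relation tailor-made for Theorem \ref{mish}. I would encode it by the point set $\mathcal P=\{(a,\,as',\,t'):a\in A,\ (s',t')\in L\}\subset F^3$ and the planes $\mathcal H=\{\,sX-Y-Z=-t:(s,t)\in L\,\}$: a short check shows $(a,as',t')\in H(s,t)$ iff $a(s-s')=t'-t$, and that the ordinate of the meeting point equals $as'+t'=Y+Z$, a \emph{linear} function of the point's coordinates. Hence requiring the meeting point to lie in $Q$ is the restriction $Y+Z\in B$, which cuts $\mathcal P$ down to $\mathcal P_B$ of size $\sum_{\ell}i(\ell)=|I(Q,L)|\le |A||L|<p^2$, where $i(\ell)$ is the number of points of $Q$ on $\ell$. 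The incidences between $\mathcal P_B$ and $\mathcal H$ then count exactly $\mathcal E$. To strip off the diagonal I would take $L^*$ to consist of the lines $\{(u,us',t'):u\in F\}$, one per $(s',t')\in L$: these carry the fibres of $\mathcal P_B$ over a fixed $(s',t')$ and lie inside $H(s,t)$ only when $(s,t)=(s',t')$, so discounting them via Theorem \ref{mishh} removes precisely the concurrences with $\ell=\ell'$. Thus $\mathcal E_{\neq}\ll |L|(\sqrt{|\mathcal P_B|}+k^\ast)$, and since $|\mathcal P_B|\le |A||L|$ this is $\ll |A|^{1/2}|L|^{3/2}+|L|k^\ast$. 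Feeding it into $|I(Q,L)|^2\le |Q|(\mathcal E_{\neq}+|I(Q,L)|)$ yields $|I(Q,L)|\ll |A|^{3/4}|B|^{1/2}|L|^{3/4}+(|Q||L|k^\ast)^{1/2}+|Q|$, which is the claimed bound as soon as $k^\ast=O(1)$.

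The main obstacle is exactly the control of $k^\ast$, the largest number of points of $\mathcal P_B$ on a line of $F^3$ outside $L^*$. Unwinding the definition, such a line forces a whole family of points $(a,b)\in Q$ to be collinear in $F^2$ while each lies on a line of $L$ whose slope is a prescribed M\"obius function of $a$; equivalently it forces many members of $L$ onto a fixed conic of the $(s,t)$-plane. I expect the delicate point to be showing that these degenerate concurrences are rare enough that their total contribution is absorbed into the $|Q|+|L|$ term, so that effectively $k^\ast=O(1)$. A secondary technical matter is orientation: Theorem \ref{mishh} is applied with $\mathcal P_B$ in the role of the point set, which is legitimate only when $|\mathcal P_B|=|I(Q,L)|\le|\mathcal H|=|L|$; the complementary, incidence-rich regime should be handled by a dyadic decomposition of $L$ according to $i(\ell)$, or via the weighted form Theorem \ref{wmish}, so as to keep the point set the smaller of the two.
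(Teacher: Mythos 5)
Your very first step --- Cauchy--Schwarz over the points of $Q$ --- is already fatal, so the gap is not merely the control of $k^*$ that you flag at the end. That step replaces $|I(Q,L)|$ by $\sqrt{|Q|\,\mathcal E}$, where $\mathcal E$ counts ordered pairs of lines of $L$ concurrent at a point of $Q$, and $\mathcal E$ is genuinely too large for the theorem to be recoverable from it, no matter how cleverly it is then bounded. Take $F=\F_p$, $|A|=|B|=n$ with $n^2\leq p$, and let $L$ consist of $n^2$ lines all passing through one fixed point $(a_0,b_0)\in A\times B$; all hypotheses hold, since $|A||L|=n^3<p^2$. Then $\mathcal E\geq r(a_0,b_0)^2=n^4$, so your inequality $|I(Q,L)|^2\leq |Q|\,(\mathcal E_{\neq}+|I(Q,L)|)$ yields only $|I(Q,L)|\lesssim n^3$, while the claimed bound is $|A|^{3/4}|B|^{1/2}|L|^{3/4}+|Q|+|L|\sim n^{11/4}$. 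This loss is exactly what your $k^*$ records: for the pencil through $(a_0,b_0)$ one has $t'=b_0-a_0s'$, so the points $(a_0,a_0s',t')$ all lie in $\mathcal P_B$ (their last two coordinates sum to $b_0\in B$) and are collinear on a line with constant first coordinate, which is not of the form $\{(u,us',t')\}$ and hence not in your $L^*$. Thus $k^*$ can be as large as the maximal concurrence of $L$, up to $|L|$ itself, the hoped-for ``effectively $k^*=O(1)$'' is false, and the term $(|Q||L|k^*)^{1/2}$ cannot be absorbed. The secondary defect you mention is also real and costs by itself a factor $|A|^{1/4}$: your three-space point set has size $|I(Q,L)|$ (or $|A||L|$) against only $|L|$ planes, so Theorem \ref{mishh} applies only in its dual form, with $k$ replaced by the maximal number of planes through a common line, which is again governed by concurrent pencils in $L$.

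For comparison: the paper only quotes this statement from Stevens--de Zeeuw and gives no proof, but the argument in the source avoids both problems by applying Cauchy--Schwarz over $B$ (the larger factor of the product) rather than over $Q$. Writing each incidence as a flag $(a,\ell)$ with $\ell: y=cx+d$ and $ca+d\in B$, one gets $|I(Q,L)|^2\leq |B|\cdot E$, where $E$ is the number of quadruples $(a,\ell,a',\ell')\in (A\times L)^2$ with $ca+d=c'a'+d'$ --- pairs of flags at the same \emph{height}, not pairs of lines through the same point --- and this quantity is harmless in the concurrency example above. The equation $ca+d=c'a'+d'$ is bilinear across the split that pairs $a$ with $\ell'$ and $a'$ with $\ell$: it is the incidence between the point $(a,c',d')$ and the plane $cX-a'Y-Z+d=0$. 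Both families have size exactly $|A||L|$, so the orientation condition of Theorem \ref{mish} is met with equality, the hypothesis $|A||L|<p^2$ is precisely the admissibility condition, and the main term $|A||L|\sqrt{|A||L|}=(|A||L|)^{3/2}$ delivers $|I(Q,L)|\ll |B|^{1/2}(|A||L|)^{3/4}$, which is the claimed exponent pattern (and explains why $|A|\leq |B|$ is assumed). The residual collinear families in that encoding are the fibres $F\times\{\ell'\}$ and the concurrent/parallel pencils of $L$, which still need a separate treatment, but at that stage they are attached to a quantity that is not already too big. Your encoding of concurrence by the points $(a,as',t')$ and planes $sX-Y-Z=-t$ is correct as far as it goes; the problem is that the quantity it counts is the wrong one.
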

Once one has Theorem \ref{SdZcp}, it can be used iteratively to yield a general point-line incidence theorem, owing to a structural observation made in the foundational paper by Bourgain, Katz  and Tao \cite[Section 6]{BKT} which was followed up on and cast into a quantitative form by Jones \cite{J}. The observation is that a large part of a putative point set in $F^2$ with too many incidences with a set of lines of roughly the same size should be contained in a Cartesian product-like structure. Although the implementation of this is relatively costly from the quantitative point of view, it is by an order of magnitude stronger than the previously known best point-line incidence bound in $\F_p^2$ by Jones \cite{J}, which was derived from earlier sum-product bounds due to the arithmetic subterfuge of {\em additive pivot} founded in \cite{BKT}. 

\begin{theorem}[\cite{SdZ}, Theorem 3]\label{SdZgen}
	The set of incidences
$I(Q,L)$ between sets $Q,\,L$ of respectively points and lines in  $F^2$ satisfies the bound \begin{equation}\label{ins} |I(Q,L)| \ll 
	(|Q||L|)^{\frac{11}{15}} + |Q|+|L| \qquad for\;\;\;|Q|^{13}|L|^{-2}< p^{15}\,. \end{equation} \end{theorem}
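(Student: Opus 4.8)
The plan is to derive Theorem \ref{SdZgen} from the Cartesian–product bound of Theorem \ref{SdZcp} by a dyadic scheme whose engine is the Bourgain–Katz–Tao structural dichotomy \cite[Section 6]{BKT}, in the quantitative form of Jones \cite{J}. Using point–line duality (the target bound is symmetric in $|Q|$ and $|L|$), I would first assume $|Q|\le|L|$ and set $I:=|I(Q,L)|$. If $I$ is already dominated by $|Q|+|L|$ there is nothing to prove, so I may assume $I$ is large. A harmless normalisation follows: discard lines meeting fewer than $\sim I/(2|L|)$ points of $Q$ and points lying on fewer than $\sim I/(2|Q|)$ lines, each deletion costing at most half the incidences. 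After $O(\log)$ dyadic refinements one is left with a \emph{regular} subconfiguration in which every surviving line meets $\sim s$ points and every surviving point lies on $\sim t$ lines, with $I\sim |L|s\sim|Q|t$; the resulting logarithmic losses are harmless and get absorbed into the final clean power $11/15$.

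The heart of the matter is to manufacture a Cartesian product from this regular configuration. Here I would invoke the structural observation of \cite{BKT}: fix two points $p_0,p_1$ of high degree and parametrise each remaining point $q$ by the pair $(\ell_0,\ell_1)$, where $\ell_0=p_0q$ lies in the pencil through $p_0$ and $\ell_1=p_1q$ in the pencil through $p_1$. Since two such pencil lines meet in a unique point, this identifies the plane with a grid whose two coordinate axes are the pencils through $p_0$ and $p_1$; a projective transformation sending $p_0,p_1$ to two points at infinity realises the identification concretely while carrying $L$ to another set of lines, so that incidences are preserved. A popularity argument on the pencil directions — the \emph{additive pivot} of \cite{BKT}, made quantitative in \cite{J} — then extracts a subset $Q'\subseteq Q$ that retains a positive proportion of the incidences and lies in a genuine grid $A\times B$, with $|A|,|B|$ controlled in terms of $|Q|,|L|,s,t$.

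With $Q'\subseteq A\times B$ in hand (arranging $|A|\le|B|$) I would apply Theorem \ref{SdZcp} to the product $A\times B$ and the surviving lines, yielding $|I(Q',L)|\ll |A|^{3/4}|B|^{1/2}|L|^{3/4}+|Q'|+|L|$. Because $Q'$ still carries $\gtrsim I$ incidences, this is an upper bound for a constant multiple of $I$ itself, and the remaining work is to balance parameters: the grid sizes $|A|,|B|$ trade off against the fraction of incidences $Q'$ retains, and optimising this trade-off across the dyadic ranges of $s$ and $t$ is what produces precisely the exponent $11/15$, the leftover contributions collapsing into the trivial term $|Q|+|L|$.

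I expect the genuine difficulty to lie in the extraction step and its bookkeeping rather than in any single inequality. One must (i) choose the pivots $p_0,p_1$ and the popular pencil directions so that $|A|,|B|$ are \emph{small} and yet $A\times B$ absorbs a constant proportion of the incidences — this is exactly where the quantitative strengthening of \cite{J} over the qualitative statement of \cite{BKT} is indispensable — and (ii) propagate the positive–characteristic restriction through the construction. Since Theorem \ref{SdZcp} is available only under $|A||L|<p^2$, substituting the optimal grid parameters back into this hypothesis is what turns into the stated condition $|Q|^{13}|L|^{-2}<p^{15}$; checking that the optimisation respects this constraint uniformly, and that the pigeonholing and popularity losses affect only subpolynomial factors invisible in the power $11/15$, is the delicate part of the argument.
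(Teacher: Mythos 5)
Your route is the one the paper itself points to: Theorem \ref{SdZgen} is quoted from Stevens and de Zeeuw, and the surrounding text only records that it follows from Theorem \ref{SdZcp} via the Bourgain--Katz--Tao pivoting observation in Jones's quantitative form, applied \emph{iteratively}. So the architecture of your sketch -- prune to a regular configuration, pass to pencils through two pivots, send the pivots to infinity, apply the Cartesian-product bound -- is the intended one.

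There is, however, a step that fails as you state it: the claim that the extraction produces a single grid $A\times B$ containing a subset $Q'$ retaining a positive proportion of the incidences. After pruning so that every surviving point lies on $\gtrsim k=I/|Q|$ lines and every surviving line carries $\gtrsim I/|L|$ points, a point $q$ lands in the grid attached to pivots $p_0,p_1$ only if \emph{both} connecting lines $p_0q$ and $p_1q$ belong to $L$; parametrising ``each remaining point'' by its two pencil lines, as you propose, otherwise yields coordinates ranging over sets of size up to $|Q|$, which is useless in Theorem \ref{SdZcp}. Double-counting triples $(p_0,p_1,q)$ with $p_0q,\,p_1q\in L$ shows the best pivot pair captures only $\gtrsim k^4/|Q|$ points in a grid with $|A|,|B|\ll k$, and at the critical value of $k$ this is a vanishing fraction of $Q$ (for $|Q|=|L|=N$ one has $k\sim N^{7/15}$ and capture $\sim N^{13/15}=o(N)$). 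The argument therefore must be iterated: bound the $\ll k^{5/4}|L|^{3/4}$ incidences one grid accounts for via Theorem \ref{SdZcp}, delete the captured points, and repeat; the exponent $\frac{11}{15}$ is precisely what falls out of balancing the per-round capture $k^4/|Q|$ against this per-round budget, and substituting $|A|\sim k$ into the hypothesis $|A||L|<p^2$ gives the stated characteristic constraint (in the balanced case, $N^{22/15}<p^2$, i.e.\ $N^{11}<p^{15}$). Since the entire content of the theorem is this exponent and this constraint, the optimisation you defer is not bookkeeping but the proof itself, and the single-shot extraction you assert in its place would not deliver it.
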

	
In positive characteristic the bounds of Theorems \ref{mish} through \ref{SdZgen} will be referred to as the {\em small set case}, that is they hold under some $<$ inequality constraints  in terms of $p$. The complementary  {\em large set case} has been approached in the finite field case -- in particular in the context of applications discussed further in this review -- via  eigenvalue linear algebra-based techniques, effected by the use of character sums or spectral graph lemmata. See, e.g. \cite{Io}, \cite{IR}, \cite{HI} \cite{mamma}, \cite{IKL}, \cite{Vinh}.  In particular, the latter work by Vinh \cite[Theorem 3]{Vinh} established a finite field point-line incidence bound, which in the $\F_p^2$ context states
 \begin{equation}\label{insv}	|I(Q,L) | \leq \frac{|Q||L|}{p} +  \sqrt{p|Q||L|}\,.\end{equation}

	 \subsection{Sharpness of Theorem \ref{mish}}
There are some examples where the bound of Theorem \ref{mish} is tight. One basic example is as follows. Let $F=\F_p$, take $Q=\mathcal S^2_1$, the unit sphere so $|Q|\sim p^2$. A positive proportion of planes in $F^3$ will meet $Q$ in a conic, which has $\sim p$ points. Hence, the number of incidences is $\Omega(|\Pi|\sqrt{|Q|})$. Moreover, suppose where $p\equiv  3 \pmod 4$, so by the forthcoming Lemma \ref{easy} at most two points are collinear. This examples easily generalises to $Q$ being a two-dimensional bounded degree irreducible variety; if the variety contains lines, one can forbid incidences along these lines and use Theorem \ref{mishh}.

In another example, discussed in detail in \cite[Section 6.2]{Rud}, one considers the set $S$ of points with co-prime coordinates in $[1,\ldots,N]^2$, with  $N< \frac{1}{2}\sqrt{p}$ and the equation $s\cdot t=s'\cdot t$ with variables in $S$. The number of solutions of this equation is bounded from above, by Theorem \ref{mish}, as $O(|S|^3),$ as well as from below as $\Omega(|S|^3),$ which follows by Cauchy-Schwarz inequality, since one knows that all the dot products have values in $[1,\ldots,4N^2]$.

Stevens and de Zeeuw \cite[Example 5]{SdZ} illustrate tightness of Theorem \ref{SdZcp} by matching it with the lower bound in the well-known example by Elekes \cite{Ele}, often used to illustrate tightness of the Szemer\'edi-Trotter theorem. However the Cartesian products $A\times B$ representing point and  line sets in this well-known construction are very uneven, with $|B|\sim|A|^2$ (each line containing $|A|$ points, so the number of incidences is $\sim|A|^4$).

Iteration of Theorem \ref{SdZcp} into Theorem \ref{SdZgen} is quantitatively costly, hence there is hardly a nontrivial instance of tightness of Theorem \ref{SdZgen}.

\subsection{On the proof of Theorem \ref{mish}.}
We do not aim to present a coherent proof here, however will attempt to describe the main idea of how Theorem \ref{mish} gets reduced to a variant of Theorem \ref{gkt}, which, as pointed out earlier, holds over any field $F$, with a constraint $|L|<p^2$ in positive characteristic. It suffices to consider both statements in the algebraic closure of  $F$, or equivalently assume henceforth that $F$ is algebraically closed, in particular infinite.

If a point $q$ lies in a plane $\pi$, there is a pencil of lines incident to $q$ and contained in $\pi$, geometrically a $\Pro^1$. Hence, one moves from the ``physical space'' $\Pro^3$ to the space of lines in $\Pro^3$. The latter is the four-dimensional Klein quadric $\mathfrak K\subset \Pro^5;$ one may think of it as the ``phase space''. The Klein map takes a line $l\subset \Pro^3$ one-to-one to a point $\mathfrak l\in \mathfrak K$.
See \cite[Chapter 6]{JS} or \cite[Chapter 2]{PW} for detail, starting with Pl\"ucker coordinates, that we attempt to avoid in this informal exposition.

The Klein map  takes the set of all physical lines $l$ incident to the ``physical'' point $q$  to a two-plane $\alpha_q\subset \mathfrak K$. Indeed, the set of all lines incident to $q$, viewed projectively is a copy of $\Pro^2$. Similarly, the Klein image of the set of all physical lines $l$ incident to the plane $\pi$  is a two-plane $\beta_q\subset \mathfrak K$. Thus $\mathfrak K$ has two rulings by two-planes, referred to as $\alpha$ and $\beta$-planes, the variety of each ruling being $\Pro^3$. Two planes of the same type always meet at a point in $\mathfrak K$, for there is a unique physical line incident to two distinct physical points. Two planes $\alpha_q$ and $\beta_\pi$ in $\mathfrak K$ meet if and only if $q\in \pi$, this happens along the line in $\mathfrak K$, which is the Klein image of the physical line pencil in $\pi$ via $q$. Thus the space of physical point-plane incidences  in $\Pro^3$ is mapped to the five-dimensional variety of all lines in $\mathfrak K$.

Given two finite sets $\{\alpha_{q\in Q}\}$ and $\{\beta_{\pi\in \Pi}\}$ of two-planes of the two types in $\mathfrak K$, the number of incidences $|I(Q,\Pi)|$ equals the number of lines in pair-wise intersections of the two sets of two-planes:
$$
|I(Q,\Pi)| = | \{ (q,\pi) \in Q\times \Pi: \; \alpha_q\cap\beta_\pi \neq\emptyset\}|.
$$

Next one chooses a generic hyperplane $H\subset \Pro^5$ in the phase space, which will meet $\mathfrak K$, in such a way that (i) the intersection of $H$ with each of the finite two-planes in $\{\alpha_{q\in Q}\}$ and $\{\beta_{\pi\in \Pi}\}$ is a line -- these lines are further referred to $\alpha$ and $\beta$-lines in $\mathfrak K \cap H$, and (ii) $H$ does not contain any of the $\leq |Q|^2+|\Pi|^2$ points of pair-wise intersection of two-planes of the same type. Since $F$ is algebraically closed, the supply of such $H$ is infinite.

This having been done, one now deals with a bi-partite version of Theorem \ref{gkt}, aiming to get a bound on the number of pair-wise intersections of $|Q|+|\Pi|$ lines in a three-quadric $\mathfrak K\cap H$. 
By the choice of $H$, the main condition of Theorem \ref{gkt} that at most two lines meet at a point is satisfied. Unless $H$ is tangent to $\mathfrak K$ at some point $\mathfrak l$, $\mathfrak K\cap H$ contains no planes, but if one intersects it with a three-hyperplane inside the four-hyperplane $H$, the intersection is a quadric surface. An easy geometric argument shows that any three-hyperplane inside $H$ can be put into a four-hyperplane $H'=T_{\mathfrak l}\mathfrak K$, that is $H'$ is tangent to $\mathfrak K$ at some point $\mathfrak l$. Thus the physical points $q$ and planes $\pi$, such that the corresponding $\alpha$ and $\beta$-lines in the phase space are contained in the two-quadric $\mathfrak K\cap H\cap H'$ are exactly those points and planes, incident in the physical space to the line $l$ -- the Klein map pre-image of $\mathfrak l$. This accounts for the role of the parameter $k$ in Theorem \ref{mish} versus Theorem \ref{gkt}.

With this construction in mind, the proof of Theorem \ref{mish} becomes mostly a technical matter, given the proof of Theorem \ref{gkt} in \cite{GK} and the fact that the latter works over a general $F$ if $\min(|Q|,|\Pi|)<p^2$ in positive characteristic. The origin of the constraint is the applicability of the Monge-Salmon theorem, bounding the number of lines that a non-ruled irreducible algebraic surface in $\Pro^3$ may support, in terms of the degree $D>2$ of the surface, {\em provided that}  $D<p$. See  \cite{Vo}, \cite{ES}, \cite{Ko} and \cite{Rud} for details. 

De Zeeuw \cite{dZ} developed a ``physical space'' proof of Theorem \ref{mish} that requires no familiarity with the Klein quadric and its rulings by planes and can therefore be presented more economically. We briefly describe it in the language of the above presentation. While in \cite{Rud} the hyperplane $H$ was chosen to intersect $\mathfrak K$ transversely, it can, in fact, be chosen as $H=T_{\mathfrak l}\mathfrak K$, the tangent space at a generic point $\mathfrak{l}\in \mathfrak K$. The variety $\mathfrak K\cap T_{\mathfrak l}\mathfrak K$ is a three-dimensional quadric, which physically corresponds to the set of all physical lines in $\Pro^3$ meeting the Klein pre-image $l$ of $\mathfrak l$. It is often called a {\em singular line complex}, versus a {\em regular} one arising when $H$ cuts $\mathfrak K$ transversely (which is geometrically different and enables a rather different interpretation in the physical space, see \cite[Section 4]{Rud} and more generally \cite[Chapter 3]{PW}).  

This underlies the following affine (rather than projective) parameterisation that led de Zeeuw \cite{dZ} straight to an application of a bi-partite version of Theorem \ref{gkt}, presenting which he took some shortcuts, referring to the paper of Koll\'ar \cite{Ko}. Choose a generic (neither containing any $q\in \Q$, nor itself contained in or parallel to any $\pi\in \Pi$) affine line $l_0\subset F^3$ and a generic (thus not containing either any $q\in \Q$, or $l_0$) affine plane $\pi_0\subset F^3$.   Fix affine coordinate systems $z\in F$ on $l_0$ and $(x,y)\in F^2$ on $\pi_0$, with the dual coordinates $(x^*,y^*)$.  For $q\in Q$, consider a pencil of lines incident to both $q$ and $l_0$. Parameterise the pencil by the one-dimensional coordinate $z\in l_0$ of the intersection of a line in the pencil with $l_0$, as well as the pair $(x,y)$ of the intersection of this line in the pencil with $\pi_0$. It's easy to see that the affine pencil becomes parameterised as a line $(x(q,z),y(q,z),z)\subset F^3$. Furthermore, each plane $\pi\in \Pi$ will intersect $l_0$ at a point $z=z_\pi$, and a line in the pencil of lines in $\pi$ through $l_0\cap \pi$ gets parameterised as $(x^*(z_\pi),y^*(z_\pi), z_\pi)$.

\medskip
We end this discussion by mentioning that Theorem \ref{mish} has a corollary of independent interest, concerning the number of incidences between a set $L$ of lines and set $P$ of points in a projective three-quadric $H\cap \mathfrak K$,  where the hyperplane $H$ in the phase space intersects $\mathfrak K$ transversely. The affine part of this quadric can be viewed as $SL_2(F)$ with its standard embedding in $F^4$. Lines in $SL_2$ are cosets of one-dimensional subgroups, conjugate to  $\left\{\left(\begin{array}{ll}1&t\\0&1\end{array}\right),\;t\in F\right\}$.  

\begin{corollary}\label{sl2lines} Consider a finite set $L$ of affine lines in $SL_2(F)\subset F^4$, with $|L|<p^2$ if $F$ has characteristic $p>0$, suppose at most $k$ lines lie in a two-quadric. The set of incidences $I(P,L)$ of $L$ with a finite set of points $P\subset SL_2(F)$  satisfies the bound
$$
|I(P,L)| \ll |P|^{\frac{1}{2}}|L|^{\frac{1}{2}}(|L|^{\frac{1}{4}}+k^{\frac{1}{2}}) + |P|.
$$
\end{corollary}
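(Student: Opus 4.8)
The plan is to deduce the bound from the First Guth--Katz theorem (Theorem \ref{gkt}) applied \emph{inside} the three-quadric $H\cap\mathfrak K$, by a single Cauchy--Schwarz step that trades the point-line incidences $|I(P,L)|$ we want to control for the number of pairwise intersections of the lines in $L$ -- precisely the quantity Theorem \ref{gkt} governs. Organising the count by points, write $r(p)$ for the number of lines of $L$ through $p\in P$, so that $|I(P,L)|=\sum_{p\in P}r(p)$. Then
\begin{equation*}
|I(P,L)|^2\le |P|\sum_{p\in P}r(p)^2=|P|\Bigl(|I(P,L)|+2S\Bigr),\qquad S:=\sum_{p\in P}\binom{r(p)}{2},
\end{equation*}
where $S$ counts the unordered pairs of distinct lines of $L$ meeting at a point of $P$. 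Since two distinct lines of $\Pro^4$ meet in at most one point, $S$ is bounded by the total number of pairwise intersections of the lines in $L$.

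Solving the quadratic inequality gives $|I(P,L)|\ll |P|+|P|^{1/2}S^{1/2}$, so it suffices to prove $S\ll |L|^{3/2}+|L|k$: substituting this and using $S^{1/2}\le |L|^{3/4}+|L|^{1/2}k^{1/2}$ reproduces the claimed estimate exactly, the bracketed terms $|L|^{1/4}$ and $k^{1/2}$ arising from the two summands of $S$, and the stray $|P|$ from the linear term of the quadratic. Thus the whole problem reduces to bounding pairwise line intersections in the quadric.

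For that I would invoke Theorem \ref{gkt} for the set $L$ of lines in the three-dimensional quadric $H\cap\mathfrak K$, which, as recalled in the discussion of the proof of Theorem \ref{mish}, holds over a general field $F$ under the hypothesis $|L|<p^2$ present in the statement. The relevant parameter there is the maximum number of lines of $L$ lying in a plane or ruled quadric. Transversality of $H$ makes $H\cap\mathfrak K$ smooth, and a smooth three-quadric contains no two-plane, while any $\Pro^2$ meets it in a conic and hence in at most two lines; so the only configurations of many intersecting lines are the two-quadrics $H\cap\mathfrak K\cap H'$ figuring in the statement, and the Guth--Katz $k$ is exactly the $k$ of the corollary. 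This yields $S\ll |L|^{3/2}+|L|k$.

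The one point requiring care, and which I expect to be the main obstacle, is the no-three-concurrent hypothesis of Theorem \ref{gkt}: in a three-quadric the lines through a fixed point form a one-parameter family, so $L$ may contain many concurrent lines. The geometric fact that resolves this is that all lines of $H\cap\mathfrak K$ through a point $p$ lie in the single quadric cone cut out by the tangent hyperplane $T_p(H\cap\mathfrak K)$, a (degenerate) ruled two-quadric with vertex $p$; consequently at most $k$ lines of $L$ pass through any one point, i.e. $r(p)\le k$, and the same cap holds at every intersection point. With concurrency thus capped by $k$ one passes from the no-three-concurrent form of Theorem \ref{gkt} to the bound on $S$ in the standard way: for each dyadic level $2\le r\le k$ one estimates the number of points incident to at least $r$ lines and sums $\sum\binom{r(p)}{2}$ against these rich-point counts, the two-quadric bound keeping the sum geometric and returning $S\ll |L|^{3/2}+|L|k$ (at worst up to a harmless sub-polynomial factor, which is in any case absorbed by $\lesssim$). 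Feeding this into the Cauchy--Schwarz inequality above completes the proof.
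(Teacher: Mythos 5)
Your opening Cauchy--Schwarz step is exactly the paper's: both reduce the problem to bounding $S$, the number of pairs of lines of $L$ meeting at a point. The gap is in how you bound $S$. You propose to apply Theorem \ref{gkt} directly to the set $L$, and you correctly identify the no-three-concurrent hypothesis as the obstacle, but your resolution of it does not work. Knowing that at most $k$ lines of $L$ pass through any point (true, via the tangent cone) does not make Theorem \ref{gkt} applicable: that theorem, as stated and as available over a general field, gives only the two-rich bound. The ``standard'' dyadic passage from two-rich points to $\sum_p\binom{r(p)}{2}$ requires the $r$-rich point estimates of the \emph{Second} Guth--Katz theorem, which is a polynomial-partitioning result specific to $\R$ and is explicitly unavailable in the present setting; the random-sampling substitute, applied to the two-rich bound, only yields that the number of $\ge r$-rich points is $O((|L|/r)^{3/2}+\cdots)$, and summing $r^2$ against this over $2\le r\le k$ produces a term of order $k^{1/2}|L|^{3/2}$, which is too weak for the claimed estimate. (There is also the smaller unaddressed issue that $L$ lives in $F^4$, not in the three-space of Theorem \ref{gkt}, so at minimum a generic projection argument is needed.)

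The paper circumvents all of this by never applying Theorem \ref{gkt} to $L$ at all. A line contained in $\mathfrak K$ is the intersection $\alpha_q\cap\beta_\pi$ for a unique incident pair $(q,\pi)$, so $L$ determines a point set $Q$ and a plane set $\Pi$ in $\Pro^3$, each of size $|L|$; if two lines of $L$ meet at a point of $\mathfrak K$, then $\alpha_{q_1}$ meets $\beta_{\pi_2}$, i.e.\ $q_1\in\pi_2$, whence $S\le |I(Q,\Pi)|$. Theorem \ref{mish} -- which carries \emph{no} concurrency hypothesis -- then gives $S\ll |L|^{3/2}+|L|k$ directly, with the corollary's $k$ matching the collinearity parameter of Theorem \ref{mish} via the two-quadrics $H\cap\mathfrak K\cap T_{\mathfrak l}\mathfrak K$. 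This is precisely why the corollary holds without any bound on the number of concurrent lines: the concurrency difficulty is absorbed into Theorem \ref{mish}, whose own proof disposes of it by slicing the $\alpha$- and $\beta$-planes with a \emph{generic} hyperplane $H$ -- a freedom one does not have for a prescribed set of lines $L$. To repair your argument you should replace the appeal to Theorem \ref{gkt} by this lift of $L$ to $(Q,\Pi)$ and an appeal to Theorem \ref{mish}.
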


Corollary \ref{sl2lines} holds {\em without} the assumption that no more than two lines meet at a point. It becomes a restricted -- by the fact that  $L$ is a subset of a three, rather than four-dimensional variety of lines -- general $F$ version of the point-line incidence theorem in $\R^3$, cited as \cite[Theorem 1.1]{SS} implicit in \cite{GK}. It's worth pointing out that lines in $SL_2$, concurrent at some point, lie in a two-quadric.

\begin{proof} Use the same notations $(P,L)$  for corresponding pair of projective sets of points and lines in $H\cap \mathfrak K$. Each projective line in $L$, a physical line pencil in $\Pro^3$, lifts uniquely as a pair $(\alpha_q,\beta_\pi)$ of two-planes ruling $\mathfrak K$. Thus the set of lines $L$ produces the pair $(Q,\Pi)$ of point and plane sets in $\Pro^3$, both of cardinality $|L|$.
By the Cauchy-Schwarz inequality 
$$
|I(P,L)|\leq\sqrt{|P|}\sqrt{|I(Q,\Pi)|} + |P|
$$
and the claim follows by Theorem \ref{mish}.
\end{proof}

\section{On isotropic directions} \label{null} This short section contains the necessary minimum, concerning isotropic vectors in $F^d$, where $d=3,4.$ A nonzero vector $s\in F^d$, $d\geq 2$ is {\em isotropic}, or {\em null} if $\|s\|^2=s\cdot s=0$, relative to the standard dot product. (Throughout {\em orthogonality,} or {\em normality}, or {\em right angle} of vectors $s,t$ means that $s\cdot t=0.$) In $F^2$ there are no isotropic vectors if $-1$ is not a square in $F$ -- in the context of $F=\F_p$, this means  $p\equiv 3 \pmod 4$. Otherwise $F^2$ has a (non-orthogonal) basis of isotropic vectors $s=(1,\pm \i)$, where $\i^2=1$. 

In $F^3$ isotropic vectors form an {\em isotropic cone} $\mathcal S^2_0$ through the origin; in $\F_p$ it is the union of $p+1$ lines for odd $p$. 

By non-degeneracy of the dot product (that is if $W$ is a subspace of $F^d$, then the dimensions of $W$ and its orthogonal complement add up to $d$) if $s,t$ are nonzero isotropic vectors in $F^3$, with $s\cdot t=0$, then one is a scalar multiple of the other. Therefore, the only nontrivial {\em null triangles} in $F^3$,  that is triangles $rst$ whose all three sides are {\em null pairs} (that is $r-s,\,s-t,\,t-r$ are all isotropic vectors) are degenerate ones, namely when the three vertices $r,s,t$ lie on some isotropic line. 

Indeed, otherwise, from
$$
s-t = (s-r) + (r-t),
$$
hence $(s-r) \cdot (r-t) =0$, one deduces that plane in $F^3$, defined by the triangle $rst$ is fully isotropic, that is has an orthogonal basis of isotropic vectors and hence is contained in its orthogonal complement, which contradictions non-degeneracy of the dot product. 

For similar arguments in the same vein see, e.g., \cite[Lemma 5.1]{HI}; note that the proof there does not work in $\F_3$,  for the same reason that the presented sketch of proof of the forthcoming Lemma \ref{l:null} is vacuous in $\F_3$.

If $s$ is an isotropic vector in $F^3$ ($F^4$), we refer to its orthogonal complement, the plane (hyperplane) $s^\perp$, as a {\em semi-isotropic} plane (hyperplane). Moreover, $F^4$ contains isotropic, or {\em fully isotropic} planes, spanned by a pair of mutually orthogonal isotropic vectors.

For $t\neq 0$  and $d\geq 2$ the sphere $\mathcal S^{d-1}_t\subset F^d$ is defined as
$$
\mathcal S^{d-1}_t=\{x:\,x_1^2+\ldots+x_{d}^2 =t\}.
$$
It is easy to verify the following statement. 

\begin{lemma} \label{l:null} For $t\neq 0,$ the two-sphere $\mathcal S^2_t$ is doubly ruled by lines if  $-t$ is a square in $F$, otherwise $\mathcal S^2_t$ contains no lines.
\end{lemma}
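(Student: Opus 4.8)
The plan is to reduce the statement to the elementary fact that a nondegenerate binary quadratic form over $F$ is isotropic if and only if the negative of its discriminant is a square, and then to carry out a short discriminant computation.

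First I would pin down which lines can possibly lie on $\mathcal S^2_t$. Writing a line as $\{a+sv:\,s\in F\}$ with $a\in \mathcal S^2_t$ and a direction $v\neq 0$, and expanding $\|a+sv\|^2=\|a\|^2+2s\,(a\cdot v)+s^2\|v\|^2$, the requirement that this equal $t$ for every $s$ forces, comparing coefficients of $s^2$ and $s$, that $\|v\|^2=0$ and $a\cdot v=0$. Hence every line on the sphere has an isotropic direction $v$ and a base point in the semi-isotropic plane $v^\perp$; conversely any such pair produces a line on the sphere. In particular, counting the lines of $\mathcal S^2_t$ through a fixed point $a$ reduces to counting the isotropic directions $v$ contained in the plane $a^\perp$.

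Next I would compute the relevant discriminant. Since $\|a\|^2=t\neq 0$, the vector $a$ is anisotropic, so the dot product is nondegenerate on $a^\perp$ and $F^3=\langle a\rangle\perp a^\perp$. Multiplicativity of discriminants under orthogonal direct sums gives $\|a\|^2\cdot\disc(a^\perp)\equiv\disc(F^3)\pmod{(F^\times)^2}$, and as the standard form on $F^3$ has discriminant $1$ this yields $\disc(a^\perp)\equiv t^{-1}\equiv t$. Since a nondegenerate binary form is isotropic exactly when the negative of its discriminant is a square, the plane $a^\perp$ contains a nonzero isotropic direction if and only if $-t$ is a square — a condition that, pleasingly, does not depend on the chosen point $a$.

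Finally I would read off the two cases. If $-t$ is not a square, $a^\perp$ carries an anisotropic binary form, admits no isotropic directions, and so $\mathcal S^2_t$ contains no lines. If $-t$ is a square, $a^\perp$ is a hyperbolic plane and contains exactly two isotropic directions through the origin, giving exactly two lines of $\mathcal S^2_t$ through each point $a$; letting $a$ vary organises these into two rulings, so that $\mathcal S^2_t$ is doubly ruled. The only delicate point is the discriminant bookkeeping, which is where oddness of $p$ is used (so that $2$ is invertible and the orthogonal splitting $F^3=\langle a\rangle\perp a^\perp$ is available); the remaining steps are direct verifications.
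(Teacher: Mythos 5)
Your proof is correct, and it reaches the conclusion by a genuinely different route in its key step. Both arguments share the same initial reduction (which the paper leaves implicit and you spell out): a line $\{a+sv\}$ lies on $\mathcal S^2_t$ iff $\|v\|^2=0$ and $a\cdot v=0$, so one must count isotropic directions in $a^\perp$. The paper then does this by brute force: it writes a putative isotropic normal direction as $(1,\alpha,\beta)$, eliminates $\beta$, and completes the square to obtain $\bigl(\alpha+\tfrac{x_1x_2}{x_2^2+x_3^2}\bigr)^2=\tfrac{x_3^2}{(x_2^2+x_3^2)^2}\,(-t)$, so that solvability is governed by whether $-t$ is a square. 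This computation only works for generic $x$ (all coordinates nonzero, $x_2^2+x_3^2\neq 0$), and the paper explicitly concedes that the remaining cases require separate chasing and that the argument is vacuous over $\F_3$. You replace the computation by the structural fact that a nondegenerate binary form is isotropic iff minus its discriminant is a square, together with $\disc(a^\perp)\equiv t \pmod{(F^\times)^2}$ from the orthogonal splitting $F^3=\langle a\rangle\perp a^\perp$ (valid since $\|a\|^2=t\neq 0$). What your version buys is uniformity: it treats every point of the sphere at once, needs no genericity hypothesis, and works verbatim over $\F_3$; the cost is importing the (elementary but external) isotropy criterion for binary forms, whereas the paper's calculation is entirely self-contained. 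Both proofs are equally brief about assembling the two lines through each point into two global rulings, which is standard for smooth quadrics; your write-up is otherwise complete.
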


\begin{proof}[Sketch of proof] We consider only the generic $x=(x_1,x_2,x_3)\in \mathcal S^2_t$, whose all components are nonzero, and on top of this $x_2^2+x_3^2 \neq 0$. Otherwise, one has to chase through a few special cases, leading to the same conclusion.  (Note that   if $F=\F_3$ there is no such $x$, but the claim of the lemma is easily verified by hand for $t=\pm1$.) Suppose, an isotropic vector $d=(1,\alpha,\beta)$ is orthogonal to $x$.
 
Then by orthogonality $\beta = -\frac{x_1+\alpha x_2}{x_3}$. Since $d$ is an isotropic vector, it follows that
$$
\alpha^2+ 2\frac{x_1x_2}{x_2^2+x_3^2}\alpha + \frac{x_1^2+x_3^2}{x_2^2+x_3^2}  = 0.
$$
Equivalently,
$$
\left(\alpha + \frac{x_1x_2}{x_2^2+x_3^2}\right)^2 = -  \frac{x_3^2(x_1^2+x_2^2+x_3^2)} {(x_2^2+x_3^2)^2} =  \frac{x_3^2} {(x_2^2+x_3^2)^2} \cdot (-t)\,. 
$$
The statement follows, since if $-t$ is a square there are two roots of the latter equation in $\alpha$,  and none otherwise. \end{proof}

The sphere $\mathcal S^3_t$, on the other hand, intersects its tangent space at a point $x$ ($x$ itself is not isotropic) along a two-dimensional cone, formed by isotropic vectors orthogonal to $x$.

We will deal with isotropic lines in $F^3$ or a three-quadric $\mathcal S^3_t$, and use the following lemma. 
\begin{lemma} \label{easy} 
For a finite set $A\subset F^3$ or $A\subset \mathcal S^3_t$,  either $\Omega(|A|)$ points are collinear on a isotropic line, or a positive proportion of $A\times A$ are not null pairs. 
\end{lemma}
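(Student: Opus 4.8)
The plan is to prove the dichotomy in its contrapositive form. If a positive proportion of $A\times A$ are non-null, we are in the second alternative and there is nothing to do; so I will assume instead that the number of non-null (distinct, unordered) pairs is at most $\epsilon n^2$, where $n=|A|$ and $\epsilon$ is a small absolute constant to be fixed, and I will exhibit a single isotropic line carrying $\Omega(n)$ points of $A$. The diagonal contributes only $O(n)$ pairs and is discarded. A Markov/averaging step isolates a large set of \emph{good} points: since the total non-null degree is $2\epsilon n^2$, all but a small constant fraction of the points $s\in A$ have \emph{null-degree} $d(s):=|\{v\in A:\ v-s\ \text{isotropic}\}|\ge(1-C\epsilon)n$. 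The good set has $\gg n$ points while non-null pairs are scarce, so it contains a null pair $s_1,s_2$, and by inclusion--exclusion $s_1,s_2$ share at least $(1-2C\epsilon)n$ common null-neighbours in $A$.

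Everything then reduces to a rigidity statement: the common null-neighbours of a fixed null pair lie on a bounded union of isotropic lines. For $A\subset F^3$ this is immediate from the structural fact quoted above. Translating so that $s_1=0$, a common null-neighbour $v$ satisfies $\|v\|^2=0$ and $\|v-s_2\|^2=0$; since $s_2$ is itself isotropic (being null-paired with $0$), subtracting gives $v\cdot s_2=0$, so $v$ is a nonzero isotropic vector orthogonal to the isotropic vector $s_2$, hence a scalar multiple of $s_2$ by non-degeneracy. Thus all the common null-neighbours lie on the single isotropic line through $s_1,s_2$, which is the line we want once $\epsilon$ is small.

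For $A\subset\mathcal S^3_t\subset F^4$ the same scheme applies after rewriting the null condition on the sphere: for $x,y\in\mathcal S^3_t$, the vector $x-y$ is isotropic exactly when $x\cdot y=t$. The common null-neighbours of a null pair $s_1,s_2$ then lie in the affine $2$-plane $P=\{v:\ v\cdot s_1=v\cdot s_2=t\}$ (here $s_1,s_2$ are independent: $s_2=\lambda s_1$ with $s_1,s_2\in\mathcal S^3_t$ forces $\lambda=\pm1$, and $\lambda=-1$ would give $s_1\cdot s_2=-t\ne t$), hence on the conic $C=P\cap\mathcal S^3_t$. One checks that the line through $s_1,s_2$ has isotropic direction $s_1-s_2$, satisfies $s_1\cdot(s_1-s_2)=0$, and therefore lies on $\mathcal S^3_t$; being also contained in $P$, it is a component of $C$. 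A plane conic containing a line splits into two lines, both lying on $\mathcal S^3_t$ and so both isotropic, whence the common null-neighbours lie on at most two isotropic lines and one of them carries $\Omega(n)$ points.

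The step I expect to need the most care is ruling out the degenerate possibility $P\subset\mathcal S^3_t$, in which $C$ would be the entire $2$-plane and no line could be extracted; this is precisely where the ambient dimension enters. Such a $P$ would force the linear plane $\{s_1,s_2\}^\perp$ to be totally isotropic while lying inside $s_1^\perp$, a $3$-dimensional non-degenerate quadratic space (non-degenerate because $s_1$ is non-isotropic, $\|s_1\|^2=t\ne0$). But a $3$-dimensional non-degenerate form has Witt index at most $1$, so it admits no totally isotropic $2$-plane; the degeneracy is impossible and $C$ is a genuine conic, closing the argument. As usual one takes $p$ large, the very small characteristic cases being verified by hand, in the spirit of the remark on $\F_3$ above.
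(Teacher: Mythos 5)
Your proof is correct, and it takes a genuinely different route from the paper's. The paper forms the graph on $A$ whose edges are the null pairs and argues by triangle density: triangles of that graph are exactly collinear triples on isotropic lines (the degenerate-null-triangle facts of Section \ref{null}), so if no isotropic line is rich there are few triangles, while edge density $1-\epsilon$ would force triangle density $1-O(\epsilon)$ (pigeonhole, or Razborov's bound) --- a contradiction. You instead run the contrapositive through a common-neighbourhood argument: an averaging step produces a single null pair $s_1,s_2$ whose common null-neighbourhood contains almost all of $A$, and you prove the rigidity statement that this neighbourhood is confined to $O(1)$ isotropic lines. The geometric input is the same in both cases (non-degeneracy of the form; no totally isotropic two-plane inside $s_1^\perp$), but your packaging is more elementary and self-contained, needs no triangle-density black box, and yields the rich line directly with explicit constants. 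Your handling of the sphere is in fact more careful than the paper's one-sentence justification: you identify the common neighbourhood with the conic $P\cap\mathcal S^3_t$, observe that it splits into at most two isotropic lines, and rule out the degenerate case $P\subset\mathcal S^3_t$ by the Witt-index bound, which is precisely the paper's ``$2+3\neq4$'' remark. (A small bonus: since any $w\in\{s_1,s_2\}^\perp$ is automatically orthogonal to $s_1-s_2$, your conic is actually a double line, so the neighbourhood lies on the single line through $s_1$ and $s_2$, matching the $F^3$ case; your ``at most two lines'' is a harmless over-count.) The closing hedge about very small characteristic is unnecessary --- nothing in your argument divides by more than $2$ --- but harmless.
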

\begin{proof}
Build a graph $G$ on the vertex set $A$, connecting vertices $(a, b)$ by an edge if $(a,b)$ is a null pair, and we shall show that $G$ needs $\gg |A|^2$ additional edges to be turned into a complete graph.
	
Suppose, $A\subset F^3$. If  $|A|\geq Cp$, for some sufficiently large absolute constant $C$, then the number of collinear point triples in $A$, lying on isotropic lines is at most $|A|p(p+1)\ll \frac{|A|^3}{C^2}$. A collinear triple on an isotropic line is the only way to have a triangle in the graph $G$, so assuming that just a sufficiently small proportion of $A$ may lie on an isotropic line implies that $G$ has small triangle density. On the other hand, if  the edge density of $G$ were as large as $1-\epsilon$, for a sufficiently small $\epsilon>0$ -- which would mean that $ a - b$ were non-isotropic for only $\ll \epsilon |A|^2$ pairs $(a, b)$ -- then the triangle density would be at least $1-O(\epsilon)$. For this claim one needs merely the pigeonhole principle, or use as a black box a much more fine-tuned asymptotic formula by Razborov \cite{Raz}, which tells that that the triangle density of a graph with edge density $1-\epsilon$ is at least $1-3\epsilon + O(\epsilon^2)$.

The same proof applies to $A\subset \mathcal S_t^3$, because $S_t^3$, being three-dimensional, also cannot contain a fully isotropic two-plane. Indeed, one cannot have two distinct mutually orthogonal isotropic lines tangent to $\mathcal S^3_t$ at some point $z$, for these two lines would span a fully isotropic self-orthogonal plane, which is also orthogonal to the non-isotropic vector $z$. This contradicts non-degeneracy of the dot product: $2+3\neq4$.
\end{proof}

\begin{remark}\label{3sph} We remark that the intersection of $S_t^3$ with a fully isotropic plane is one isotropic line. Indeed, if $u,v$ are isotropic mutually perpendicular vectors in $F^4$ and $x\cdot x=t$, then for scalars $(\alpha, \beta)$, the condition $x+\alpha u +\beta v\in S_t^3$ defines a nontrivial linear equation in $(\alpha, \beta)$.

Besides, suppose $l \subset S_t^3$ is an isotropic line via $x$. Then $l^\perp \cap S_t^3$ is a developable quadric: a cylinder of isotropic lines parallel to $l$. Indeed, let $u$ be a fixed vector in the direction of $l$ and $v$ in some direction orthogonal to $u$, with $v\cdot v\neq 0$. The condition $x+\alpha u +\beta v\in S_t^3$ now reads $\beta=0$ or $\beta=\beta(v) = - 2\frac{x\cdot v}{v\cdot v}$, so $x+\beta v\in S_t^3$. The family of admissible values of $\beta$ is one-dimensional, hence $l^\perp \cap S_t^3$ is as described.
\end{remark}

\begin{remark} Since over an algebraically close field $F$, the three-quadrics $SL_2$ and $\mathcal S^3_1$ are projectively equivalent, Corollary \ref{sl2lines} also applies as a point-line incidence bound involving a set of (isotropic) lines $L$ in the unit sphere $\mathcal S^3_1$.
\end{remark}

\section{Applications to Erd\H os-type geometric questions} \label{erdos}
This section has two parts. First, we develop an application of Theorem \ref{mish} to the problem of counting distinct values of a non-degenerate bilinear form on pairs of points lying in a plane point set, extending to positive characteristic the estimates  one easily obtains over $\R$ (as well as $\C$, for it applies there as well \cite{T}) via the Szemer\'edi-Trotter theorem. Then we  consider the positive characteristic  version of the Erd\H os distance problem in dimensions three and two. 

Throughout this section, $F$ is a field of positive odd characteristic $p$.


\subsection{On distinct values of bilinear forms}\label{forms}

The  challenge  of  getting the best possible lower bounds for the cardinality of the set $\omega(S)$ of values of a non-degenerate bilinear form $\omega$, evaluated on pairs of points from a finite set $S\subset F^2$ of  points in the plane has historically received much less attention than the renown and at the first sight similar question of Erd\H os \cite{Erd}  about the number of distinct distances defined by  $S$ in the real plane, which was resolved by Guth and Katz \cite{GK}. 

However, if $\omega$ is symmetric, say the standard dot product, the author is unaware of a better proven bound than $|\omega(S)|\gg |S|^{\frac{2}{3}}$,  even over the reals, where it follows immediately after bounding as $O(|S|^{4/3})$ the maximum number of realisations of a single  nonzero value of $\omega$ via the Szemer\'edi-Trotter theorem. This also evinces dissimilarity between the two questions, for the upper bond $O(|S|^{4/3})$ on the maximum number of realisations of  a single value of $\omega$ is tight, while the number of realisations of a single distance is believed to be $\lesssim |S|$, constituting the Erd\H os single distance conjecture. Moreover, distance sets are left invariant by the three-dimensional Euclidean group, while the dot products -- only by the one-dimensional Orthogonal group. 

This is what makes a skew-symmetric $\omega$ special, for the set $\omega(S)$ is invariant to $SL_2$-action on vectors in the plane as matrix multiplication.

 One may conjecture -- clearly, in positive characteristic this may generally hold only if  $|S|<p$ -- 
 that as long as $\omega(S)\setminus\{0\}$ is nonempty (which is from now on implicit) then  $|\omega(S)|\gtrsim |S|, $ although in contrast to the case of distances, the author is unaware of examples, where $0<|\omega(S)|=o(|S|)$. 
The problem was claimed to have been solved over $\R$  in \cite{IRR}, $\omega$ being the cross or dot product. However, the set-up of the proof was flawed. The error came down to ignoring the presence of nontrivial weights (multiplicities), as they appear below. The best bound over $\R$  that the erratum \cite{IRRE} sets, is $|\omega(S)| \gg |S|^{\frac{96}{137}}$, for a skew-symmetric $\omega$. In positive characteristic,  if $|S|\leq p^{\frac{162}{161}}$, the best bound is $|\omega(S)| \gtrsim |S|^{\frac{108}{161}}$. Note that $\frac{108}{161}=\frac{2}{3}+\frac{2}{483},$ so  the state-of-the-art positive characteristic bound for a skew-symmetric $\omega$ is better than the known Euclidean bound for a symmetric $\omega$ .

 \medskip
This section presents the following theorem, slightly generalising \cite[Theorem 13]{Rud}.
\begin{theorem}\label{spr}
Let $\omega$ be a non-degenerate  bilinear form, the set $S\subseteq F^2$ and $\omega(S)\neq \{0\}$.
Then
\begin{equation}|\omega(S)| \gg \min\left(|S|^{\frac{2}{3}},p\right).\label{worst}\end{equation}
If $S'\subseteq S$ is a maximum subset of points, all lying in distinct directions  through the origin, then  $|\omega(S')|\gg \min( |S'|, p)$.
\end{theorem}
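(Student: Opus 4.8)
The theorem claims two bounds on $|\omega(S)|$, the number of distinct values of a non-degenerate bilinear form $\omega$ on pairs from $S \subseteq F^2$. The key insight, emphasized in the text, is that the skew-symmetric case is special because $\omega(S)$ is $SL_2$-invariant. Let me think about how the point-plane incidence theorem (Theorem \ref{mish}) connects to counting values of a bilinear form.

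**The main bound $|\omega(S)| \gg \min(|S|^{2/3}, p)$.**

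The natural approach is to count solutions to $\omega(s,t) = \omega(s',t')$ or equivalently to bound the "energy" — the number of quadruples with $\omega(s,t) = \omega(s',t')$. Let $D = |\omega(S)|$ be the number of distinct values. If $r(\lambda)$ counts pairs $(s,t)$ with $\omega(s,t) = \lambda$, then by Cauchy-Schwarz, $|S|^4 = (\sum_\lambda r(\lambda))^2 \leq D \sum_\lambda r(\lambda)^2 = D \cdot E$, where $E$ is the number of quadruples $(s,t,s',t')$ with $\omega(s,t) = \omega(s',t')$. So $D \geq |S|^4 / E$, and it suffices to bound $E \ll |S|^{10/3}$ (giving $D \gg |S|^{2/3}$) — at least when $|S|$ is below the threshold making $p$ the binding term.

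**Setting up the incidence count.**

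The equation $\omega(s,t) = \omega(s',t')$ is bilinear, so it defines an incidence structure between points and planes in $\Pro^3$. The plan is: for each pair $(s, s')$, the condition $\omega(s,t) = \omega(s',t')$ (as $(t,t')$ varies over $S \times S$) is a linear equation in the coordinates of $(t,t') \in F^4$, but we need to land in $\Pro^3$. A cleaner route: fix the value $\lambda$ implicitly and rewrite. Set the point set $Q$ to be points derived from pairs and the plane set $\Pi$ from the dual pairs, so that incidences encode $\omega(s,t) = \omega(s',t')$. Concretely, writing $\omega(s,t) = s^\top M t$ for the matrix $M$ of $\omega$, the equation $s^\top M t = (s')^\top M t'$ can be read as an incidence between the point $(s, s') \in F^4$ (projectivized) and the plane determined by $(t, t')$. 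One then applies Theorem \ref{mish} with $|Q| = |\Pi| \sim |S|^2$, giving $E = |I(Q,\Pi)| \ll |S|^2(\sqrt{|S|^2} + k) = |S|^2(|S| + k) = |S|^3 + |S|^2 k$. This is where the collinearity parameter $k$ must be controlled.

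**The role of $k$ and the main obstacle.**

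The hard part is bounding $k$, the maximum number of collinear points among the $\sim |S|^2$ points $(s,s')$, and translating collinearity back into structure on $S$. Here is where the appearance of \emph{weights/multiplicities} — flagged in the text as the source of the error in \cite{IRR} — becomes the crux. Collinear configurations in the point set correspond to pairs $(s,s')$ that are proportional, i.e. lie in a common direction, which is exactly why the second statement of the theorem isolates the subset $S'$ of points in distinct directions. I expect one shows $k \ll \max(\text{something}, \sqrt{|S|})$ generically, but degenerate directions force the weighted version (Theorem \ref{wmish}); one partitions $S$ by direction, applies the incidence bound on the reduced direction-set, and accounts for multiplicities via the weight $w_0$. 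The constraint $|S| < p$ (hence $|Q| = |S|^2 < p^2$) is exactly the small-set hypothesis of Theorem \ref{mish}, explaining the $\min(\cdot, p)$ truncation. For the second, sharper bound $|\omega(S')| \gg \min(|S'|, p)$: since the points of $S'$ lie in distinct directions, every line through the origin meets $S'$ in at most one point, so the collinearity parameter drops dramatically (one gets $k = O(1)$ for the lines arising from distinct directions, after excluding a forbidden set $L^*$ of lines through the origin via Theorem \ref{mishh}). This kills the $|S'|^2 k$ term down to the $|S'|^3$ energy bound only for a single value's realizations, and a direct single-value incidence count — rather than energy — yields the linear bound $|\omega(S')| \gg |S'|$. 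The main obstacle throughout is the careful bookkeeping of multiplicities when passing from $S$ to its set of directions, and verifying that the geometric condition "collinear in $Q$" really corresponds to "same direction in $S$."
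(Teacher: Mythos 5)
Your high-level strategy coincides with the paper's: bound the energy $E$ of the equation $\omega(s,t)=\omega(s',t')$ by a point--plane incidence count in $\Pro^3$ (the paper groups the variables as point $(s:t')$ versus plane $(t^\perp:{s'}^\perp)$ after reducing $\omega$ to the wedge product; your grouping $(s:s')$ versus $(t:t')$ is equally legitimate), handle multiplicities with the weighted Theorem \ref{wmish}, and finish with Cauchy--Schwarz. You have also correctly located the difficulty in the weights and in translating collinearity in $\Pro^3$ back to directional structure in $S$. However, the proposal stops exactly where the proof begins: everything from ``I expect one shows $k\ll\dots$'' onward is a statement of the obstacle, not its resolution. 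The missing content is threefold. First, the dichotomy that either some line through the origin carries more than $|S|^{2/3}$ points of $S$ (in which case \eqref{worst} is immediate by pairing that line with one exterior point), or else the maximum weight satisfies $w_0\le |S|^{2/3}$; after a pigeonhole regularisation one may assume every relevant line through the origin carries between $w_0^{1-\epsilon}$ and $w_0$ points. Second --- and this is the heart of the argument --- a projective line in $\Pro^3$ supporting many points of $Q$ is a two-plane through the origin in $F^4=F^2\times F^2$; if it projects injectively onto the coordinate $F^2$ factors it carries at most $|S|w_0^{\epsilon-1}$ points of $Q$, and otherwise it is a product $l_1\times l_2$ of two lines through the origin, in which case one must \emph{check} that every incidence along it forces $t\in l_1$, $s'\in l_2$ and hence corresponds to a zero value of $\omega$, so that it may be placed in the forbidden set $L^*$. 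Without this verification you cannot discard those lines, and without discarding them you have no usable bound on $k$. Third, the closing arithmetic $I_w^*\ll W(\sqrt{w_0W}+k^*w_0)=|S|^2(|S|\sqrt{w_0}+|S|w_0^{\epsilon})\ll|S|^{10/3}$, which is what actually delivers the exponent $2/3$.

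Two smaller points. For $S'$ the paper does not need a ``direct single-value incidence count'': since all weights are $1$, the same energy argument with $w_0=1$, $\epsilon=0$ gives $E\ll|S'|^3$ and Cauchy--Schwarz yields $|\omega(S')|\gg|S'|$ directly; your proposed alternative (bounding the realisations of a single value by $O(|S'|)$) is a strictly stronger statement than what is required and is not established by the incidence bound as stated. Also, the applicability threshold is $|S|\le p^{3/2}$ (so that $W/w_0<p^2$ in Theorem \ref{wmish}), not $|S|<p$ as you suggest; this is where the $\min(\cdot,p)$ truncation enters.
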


\begin{proof}  
From now on we assume that there are no ``very rich'' lines $l$ through the origin in $F^2$, that is lines supporting more than $|S|^{\frac{2}{3}}$ points of $S$, for otherwise one gets more than $|S|^{\frac{2}{3}}$ distinct values of $\omega(s,s')$ for $s'\in l$ and some $s\in S$. This trivial argument justifies estimate \eqref{worst} if there exists a very rich line. The claim concerning the subset $S'$ follows from the forthcoming argument independently.

Furthermore, since $\omega$ is non-degenerate, the problem is equivalent to asking for a lower bound for cardinality of the set
$$S\wedge T :=  \{s\wedge t:\,s\in S,\,t\in T=A(S)\}|\,,$$ where $A$ is a linear isomorphism and $\wedge$ the standard wedge product.

Consider the equation
\begin{equation}\label{eng} s \wedge t =s'\wedge t'\neq 0\,: \qquad (s,s',t,t')\in  S\times S\times T\times T\end{equation}
and rewrite it as
\begin{equation}\label{engg}
s \wedge t + t'\wedge s' =0\,.
\end{equation}
The latter equation can be viewed as counting the number of incidences between the set of points $Q\subset \Pro^3$ with elements $(s_1:s_2:t'_1:t'_2):=(s:t')$, written in homogeneous coordinates, and planes in a set $\Pi$ defined by covectors $(t_2:-t_1:s_2':-s_1'):=(t^\perp:{s'}^\perp)$. However, both points and planes are weighted. Namely, the weight 
$w(q)$ of a point $q=(s:t')$ is the number of points $(s,t')\in F^4$, which are projectively equivalent. Geometrically the equivalence class of $(s:t')$  contains all pairs of points in $S\times T$, which are obtained from $s$ and $t'$, respectively, via a homothety (dilation through the origin). Similarly, planes also carry weights. The total weight of both sets of points and planes is $W=|S|^2$. 

Note, however, that once we restrict $S$ to the subset  $S'\subset S$ as in the statement of the theorem, there are no weights exceeding $1$.
For $S$ itself, the maximum weight $w_0$ is trivially bounded by the maximum number of points in $S$ through the origin, i.e.,
$$
w_0\leq |S|^{\frac{2}{3}}.
$$
On the other hand, decreasing $w_0$ if necessary, we can fix a sufficiently small absolute $\epsilon>0$ and assume that a positive proportion of the set $S$ is supported on lines through the origin, each having the number of points of $S$ in the interval $[w_0^{1-\epsilon},w_0]$. We restrict $S$ to these points, as well as its linear image $T$ and the weighted sets $Q,\Pi$ of points and planes in $\Pro^3$, retaining the notations $S,T,Q,\Pi$, but  bearing in mind that the constants hidden in $\ll$ symbols now depend on powers of $\epsilon$.

Thus the number of solutions of \eqref{engg}, {\em including} the quadruples yielding zero values of $\omega$ is the number of weighted incidences
\begin{equation}\label{weightin}
I_w := \sum_{q\in Q, \pi\in \Pi} w(q)w(\pi) \delta_{q\pi},
\end{equation}
where $\delta_{q\pi}$ is $1$ when $q\in \pi$ and zero otherwise.

To estimate the latter quantity we use Theorem \ref{wmish}. Let us show that if we ignore point-plane incidences in $I(Q,\Pi)$ along a set $L^*$ of forbidden lines to be described -- these incidences being in  correspondence with zero values of the wedge product in \eqref{eng} -- the maximum number $k^*$ of collinear points in $Q\subset\Pro^3$ in  the application of Theorem \ref{wmish} can be bounded by $|S|w_0^{\epsilon-1}$. 

Consider a two-plane through the origin in $F^4=F^2\times F^2$, supporting some  points of $S\times T$. The number of points of $Q$ on the corresponding line in $\Pro^3$ is the number of lines through the origin, supporting points of $S\times T$ and lying  in the  above two-plane through the origin in $F^4$. If the two-plane projects on one of the coordinate planes $F^2\times F^2$ one-to-one, the number of such lines is at most $|S|w_0^{\epsilon-1}$. Otherwise, we forbid the two-plane, that is the corresponding line in $\Pro^3$ is added to the forbidden set $L^*$.
 
This happens if and only if the two-plane in question is the Cartesian product $l_1\times l_2\subset F^2\times F^2$, where $l_1,l_2$ are two lines through the origin, containing between $\ceil{w_0^{1-\epsilon}}$ and $w_0$ points of $S$ and $T$, respectively, each. So the $L^*$ of forbidden lines in $\Pro^3$ is defined by two-planes $l_1\times l_2\subset F^2\times F^2$, such that $l_1,l_2$ support the number of points of $S$ and $T$, respectively, in the interval  $[w_0^{1-\epsilon},w_0]$.

A point-plane incidence along a forbidden line is the solution of equation \eqref{eng}, where $s\in l_1$, $t'\in l_2$. As far as the quantities $s',t$ are concerned, the corresponding forbidden line also lies in a (projective) two-plane $\pi\subset \Pro^3$, defined by the homogeneous coordinate covector $(t^\perp:{s'}^\perp)$. Thus both lines $l_1\in F^2\times\{(0,0)\}$ and $l_2\in \{(0,0)\}\times F^2$ must lie in the three-hyperplane in $F^4$, identified by the covector $(t^\perp:{s'}^\perp)$. Clearly, this happens if and only if $t\in l_1$ and $s'\in l_2$. Returning to equation \eqref{eng} one sees that these incidences correspond to the zero value of the wedge product, which is not being counted.

Hence we can apply the $I_w^*$-version of estimate \eqref{pupsweight}, Theorem \ref{wmish},  obtaining
\begin{equation}\label{weste}
I^*_w\ll |S|^2 \left(|S|\sqrt{w_0} + |S|w_0^{\epsilon}\right)\,.
\end{equation}
The bound is valid as long as $|S|\leq p^{\frac{3}{2}}$. Otherwise one may restrict $S$ to a subset of cardinality $p^{\frac{3}{2}}$ at the outset. Estimate \eqref{worst} now follows from the standard application of the Cauchy-Schwarz inequality, using the latter bound for the number of solutions of equation \eqref{eng}, that is 
$$
|\omega(S)| \geq C(\epsilon) \frac{|S|^4}{|S|^{\frac{10}{3}}}\,,$$
with some absolute constant $C(\epsilon)$. 

Finally, if the set $S$ is replaced by $S'$ as in the formulation of the theorem, this means applying to estimate \eqref{eng} the non-weighted incidence bound \eqref{pups}, with $k=1$ or equivalently having \eqref{weste} with $w_0=1$ and $\epsilon=0$. 

\end{proof}

\subsection{On distinct distances}
A well-known question of Erd\H os about distinct distances \cite{Erd}, can be generalised as follows. For a finite $S\subset F^d$, $d>1$, define
the ``distance'' set
$$\Delta(S) = \{\|s-t\|^2:\,s,t \in S\},$$
where $\|s\|^2 = s\cdot s=\sum_{i-1}^d s_i^2.$

If $F=\R$ , $d=2$, the question was resolved by Guth and Katz \cite{GK} (the proof also applies to the spherical and hyperbolic distance \cite{RS}) and  is open in dimension three and higher. In  $\R^3$ the conjecture claims that there are $\gtrsim |S|^{\frac{2}{3}}$ distinct distances.  The best known bound is $\Omega(|S|^{.5643})$, due to Solymosi and Vu \cite{SV}.

\subsubsection{Distances in $F^3$}

We prove the bound $\Omega(\sqrt{|S|})$ for the positive characteristic pinned version of the problem, i.e., for the number of distinct distances, attained from some point $s\in S$, for $|S|\leq p^2$, assuming that $S$ is not contained in a single semi-isotropic plane, where the number of distinct distances can be smaller. In a semi-isotropic plane $y^\perp$, for an isotropic $y$, with an orthogonal basis $\{x,y\}$, one can have a set $S$ of $|S|=kl$ points, with $1\leq k\leq l$ and just $O(k)$ distinct pairwise distances: place $l$ points on anywhere on each of $k$ parallel lines in the direction $y$, whose $x$-coordinates are an interval $[1,\ldots,k]$.

In $\R^3$, the bound $\Omega(\sqrt{|S|})$, being a kind of threshold one  for the number of distinct pinned distances, was established in 1990 in the milestone paper  by Clarkson et al. \cite{CEGSW}. The proof was partially based on a space partitioning technique that the real setting enables. More recent stronger bounds, e.g., the above-mentioned bound by Solymosi and Vu \cite{SV} or Zahl's  \cite{Z} result apropos of the single distance rely on perfecting the partitioning techniques.

Presented next is an easy partition-free proof of the threshold  estimate $\Omega(\sqrt{|S})$ for the number of distinct distances, as an application of Theorem \ref{mish}.

\begin{theorem}\label{erd} A finite set $S\subset F^3$, not supported in a single semi-isotropic plane, determines $\Omega[\min(\sqrt{|S|},p)]$ distinct pinned distances, i.e., distances from some $s\in S$ to points of $S$.
\end{theorem}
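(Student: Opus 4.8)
The plan is to count, for a pinned distance problem, pairs of points at equal distance from a common point, and to convert this count into a point--plane incidence bound via Theorem \ref{mish}. Fix the pinning: for each $s\in S$, let $\Delta_s(S)=\{\|s-t\|^2:\,t\in S\}$ be the set of distances realised from $s$, and let $N$ be the maximum, over $s\in S$, of $|\Delta_s(S)|$. The goal is to show $N\gg\min(\sqrt{|S|},p)$. By a standard Cauchy--Schwarz / double-counting argument, if $N$ is small then the number of \emph{quadruples} $(s,t,t')$ with $s,t,t'\in S$ and $\|s-t\|^2=\|s-t'\|^2$ (equivalently, the number of pairs of points equidistant from a common pinning point) must be large: writing this count as $E$, one has $E\gg |S|^3/N$ if the distances from each $s$ concentrate. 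So the first step is to produce a \emph{lower bound} $E\gg |S|^3/N$ and then an \emph{upper bound} $E\ll |S|^{5/2}$ from the incidence theorem; comparing the two yields $N\gg\sqrt{|S|}$.

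**Linearising the equidistance condition.**
First I would expand $\|s-t\|^2=\|s-t'\|^2$ into $2s\cdot(t'-t)=\|t'\|^2-\|t\|^2$, which is linear in $s$ for each fixed pair $(t,t')$. This is the crucial move: it exhibits the equidistance relation as an incidence between the \emph{point} $s\in F^3$ (lifted to $\Pro^3$) and a \emph{plane} determined by $(t,t')$. Concretely, I would send each $s=(s_1,s_2,s_3)$ to a point $q=(s_1:s_2:s_3:1)\in\Pro^3$, and each pair $(t,t')$ with $t\neq t'$ to the plane $\{x:\,2(t'-t)\cdot x=\|t'\|^2-\|t\|^2\}$. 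Then $E$ (discounting the diagonal $t=t'$) equals the number of incidences $|I(Q,\Pi)|$ between the resulting point set $Q$ (of size $|S|$) and plane set $\Pi$ (of size at most $|S|^2$, possibly a multiset). Since $|Q|\le|\Pi|$ and the remark after Theorem \ref{mish} permits $\Pi$ to be a multiset as long as its underlying set has size $\Omega(|Q|)$, the theorem applies with $|Q|=|S|$, giving $|I(Q,\Pi)|\ll|\Pi|(\sqrt{|S|}+k)$, where $k$ is the maximum number of collinear points of $Q$.

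**Controlling the collinearity parameter $k$ and isotropic lines.**
The hard part will be the parameter $k$: I must show the pinned points $q\in Q$ do not pile up on a line, and this is exactly where the semi-isotropic hypothesis enters. The lift $s\mapsto(s:1)$ is injective, so a line in $\Pro^3$ carrying many points of $Q$ corresponds to many points of $S$ on an affine line in $F^3$. Many collinear points of $S$ are only a genuine obstruction when that line is isotropic, since along an isotropic direction $y$ (with $y\cdot y=0$) the distance $\|s-t\|^2$ is insensitive to the $y$-component, collapsing distinct points to a single distance. The mechanism of Lemma \ref{easy} is precisely what rules this out: unless $\Omega(|S|)$ points of $S$ lie on a single isotropic line, a positive proportion of pairs in $S\times S$ are \emph{not} null pairs, so one can restrict to a robust subset on which no line through the origin is simultaneously isotropic and over-populated. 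The assumption that $S$ is not contained in a single semi-isotropic plane is what guarantees $S$ is not degenerately concentrated along one isotropic direction, so after passing to this subset we may take $k=O(1)$, or at least $k=O(\sqrt{|S|})$, which suffices. With $k$ so controlled and $|\Pi|\ll|S|^2$, the incidence bound reads $E\ll|S|^2\sqrt{|S|}=|S|^{5/2}$.

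**Closing the argument.**
Finally I would combine the bounds. Writing $E=\sum_{s\in S}|\{(t,t'):\,\|s-t\|^2=\|s-t'\|^2\}|$ and applying Cauchy--Schwarz to the fibres of the distance map over each pinning point $s$, one gets $E\ge\sum_s|S|^2/|\Delta_s(S)|\ge|S|^3/N$. Comparing with $E\ll|S|^{5/2}$ gives $N\gg\sqrt{|S|}$, as desired; the truncation $N\gg\min(\sqrt{|S|},p)$ arises because the incidence theorem requires $|Q|=|S|<p^2$, and in the regime $|S|\ge p^2$ one restricts $S$ to a subset of size $\sim p^2$ at the outset, which caps the conclusion at $\sqrt{p^2}=p$. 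I expect the main technical obstacle to be the careful bookkeeping around isotropic lines and the application of Lemma \ref{easy} to legitimately reduce to $k=O(\sqrt{|S|})$ without losing more than a constant proportion of $S$; everything else is a routine expansion-and-Cauchy--Schwarz calculation.
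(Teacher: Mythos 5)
Your outline follows the paper's proof in its essentials --- lower-bound the pinned energy $\E_\Delta=|\{(s,t,t'):\,\|s-t\|^2=\|s-t'\|^2\neq 0\}|$ by $|S|^3/N$ via Cauchy--Schwarz, upper-bound it by $|S|^{5/2}$ by realising each pair $(t,t')$ as the perpendicular-bisector plane $2x\cdot(t'-t)=\|t'\|^2-\|t\|^2$ and invoking Theorem \ref{mish} --- but it has a genuine gap: you never exclude the \emph{null pairs} $(t,t')$, those with $\|t-t'\|^2=0$. For such a pair the bisector plane is the semi-isotropic plane $\{x:\,x\cdot(t'-t)=t\cdot(t'-t)\}$; it contains both $t$ and $t'$, and it is the \emph{same} plane for every pair of distinct points on a common isotropic line (indeed for every null pair lying in that plane with that normal direction). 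So a single plane can occur in your multiset with multiplicity far exceeding $|S|$ --- up to $\sim|S|^2$ --- the set of distinct planes need not have cardinality $\Omega(|S|)$, and the multiset form of Theorem \ref{mish} no longer delivers $|S|^{5/2}$; the null pairs can genuinely contribute $\gg|S|^{5/2}$ triples. The paper's fix is to bound by incidences only the restricted count $\E^*_\Delta$ over non-null $(t,t')$ (for which each plane bisects at most one segment per left endpoint, so has multiplicity at most $|S|$), and to show separately, by applying Lemma \ref{easy} to the rich level sets $Z_r(s)=\{t:\,\|s-t\|^2=r\}$, that either $\E_\Delta=O(\E^*_\Delta)$ or some isotropic line carries $\Omega(\sqrt{|S|})$ points. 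That is the actual role of Lemma \ref{easy} here.

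Relatedly, your control of the collinearity parameter $k$ is misattributed. Lemma \ref{easy} is a dichotomy about null pairs; it does not let you ``pass to a robust subset'' with $k=O(\sqrt{|S|})$, and many points on a \emph{non}-isotropic line degrade Theorem \ref{mish} just as badly as on an isotropic one. The correct reduction, made at the outset in the paper, is elementary: if some line carries $\geq\sqrt{|S|}$ points of $S$ you are already done. If that line is non-isotropic, the distances from one of its points to the others already take $\Omega(\sqrt{|S|})$ values; if it is isotropic with direction $v$, the hypothesis that $S$ is not contained in a single semi-isotropic plane provides $s\in S$ with $(s-t_0)\cdot v\neq 0$, whence $\|s-t\|^2$ is a non-constant affine function along the line and again takes $\Omega(\sqrt{|S|})$ values. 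This is the only place the semi-isotropic hypothesis is used. With these two repairs your argument coincides with the paper's.
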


\begin{proof}
First off, let us restrict $S$, if necessary, to a subset of at most $p^2$ points. Furthermore, assume that there are at most $\sqrt{|S|}$ collinear points or there is nothing to prove. Indeed, even if the line supporting $\sqrt{|S|}$  points is isotropic (otherwise the claim is trivial) $S$ has another point $s$  outside this line, such that the plane containing $s$ and the line is not semi-isotropic, and  then there are $\Omega(\sqrt{|S|})$ distinct distances from $s$ to the points on the  line.

Define
\begin{equation}\label{energyd} 
\E_\Delta:= |\{(s,t,t')\in S\times S\times S:\,\|s-t\|^2 = \|s-t'\|^2\neq 0\}\,. \end{equation}
The claim of the theorem will  follow if we establish that either $S$ contains a line with $\Omega(\sqrt{|S|})$ points or
\begin{equation}\label{clm}
\E_\Delta =O(|S|^{\frac{5}{2}})\,.
\end{equation}
The former case has been addressed, in the latter case an application of the Cauchy-Schwarz inequality will do the job., 

By the pigeonhole principle and  Lemma \ref{easy}, assuming $\E_\Delta\gg |S|^{\frac{5}{2}}$ implies that either there is a isotropic line with $\Omega(\sqrt{|S|})$ points, or $\E_\Delta=O(\E_\Delta^*)$, where $\E_\Delta^*$ is the number of solutions of the equation
\begin{equation}\label{energystar} 
\|s-t\|^2 = \|s-t'\|^2\neq 0\,,\qquad (s,t,t')\in S\times S\times S:\;\|t-t'\|\neq 0\,.
\end{equation}

Indeed, the quantity $\E_\Delta$ counts the number of equidistant pairs of points from each $s\in S$ and sums over $s$. If $\E_\Delta\gg |S|^{\frac{5}{2}}$, a positive proportion of  $\E_\Delta$ is contributed by points $s$ and level sets $Z_r(s)=\{t\in \F^3:\, \|s-t\|^2=r\}$, such that $Z_r(s)$ supports $\Omega(\sqrt{|S|})$ points of $S$. 
By Lemma \ref{easy} either there is a line with $\Omega(\sqrt{|S|})$ points, or  a positive proportion of pairs of distinct  $t,t'\in Z_r(s)$ are non-null.

It therefore remains to justify the bound 
\begin{equation}\label{energystarr} \E_\Delta^*\ll |S|^{\frac{5}{2}}\,\end{equation}
assuming that no line supports more than $\sqrt{|S|}$ points of $S$.

\medskip
To evaluate the quantity $\E_\Delta^*$: for each pair $(t,t')$ -- which is not a null pair -- we have a plane through the midpoint of the segment $[t\,t']$, normal to the vector $t-t'$ and need to count points $s$ incident to this plane. The plane in question does not contain $t$ or $t'$.

We arrive at an incidence problem $(S,\Pi)$ between $|S|$ points and a multiset of planes, defined by non-null pairs $(t,t')$.  The same plane can bisect up to $|S|/2$ segments $[t \,t']$, for there is at most one $t'$ for each $t$, such that the plane may bisect  $[t \,t']$. We would have been done earlier, unless the number of distinct planes is $\gg|S|$, and therefore bound \eqref{energystarr} follows from Theorem \ref{mish}.

Theorem \ref{erd} follows from \eqref{energyd} by the Cauchy-Schwarz inequality.  In particular, when $|S|=p^2$, one gets $\Omega(p)$ distinct pinned distances. 
\end{proof}

\subsubsection{Distances in $F^2$} The argument in two dimensions is similar and uses  the point-line incidence bound  in Theorem \ref{SdZgen} instead of Theorem \ref{mish}. 

 \begin{theorem}[\cite{SdZ}, Corollary 13]
A finite set $S\subset F^2$, not supported on a single isotropic line and such that $|S|\leq p^{\frac{15}{11}}$, determines $\Omega(| S|^{\frac{8}{15}})$ distinct distances from some $s\in S$.
\label{2ddist}\end{theorem}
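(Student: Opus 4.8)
The plan is to run the same scheme as in the proof of Theorem \ref{erd}, replacing the point-plane bound by the Stevens--de Zeeuw point-line bound of Theorem \ref{SdZgen}. I set
$$
\E_\Delta = |\{(s,t,t')\in S^3:\ \|s-t\|^2=\|s-t'\|^2\neq 0\}|,
$$
write $n_s(r)=|\{t\in S:\ \|s-t\|^2=r\}|$, $N_s=\sum_{r\neq 0}n_s(r)$, and let $D$ be the maximal number of distinct nonzero distances pinned at a single point. Cauchy--Schwarz gives $\E_\Delta\geq\sum_s N_s^2/D$, so once I establish $\sum_s N_s^2\gg|S|^3$ and $\E_\Delta\lesssim|S|^{37/15}$ I obtain $D\gg|S|^{3-37/15}=|S|^{8/15}$, as desired. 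First I dispose of rich isotropic lines. If some isotropic line $\ell_0$ carries at least $|S|^{8/15}$ points of $S$, I use the hypothesis $S\not\subset\ell_0$ to pick $s_0\in S\setminus\ell_0$; parametrising $\ell_0$ as $a+\lambda u$ with $u$ isotropic, one has $\|s_0-(a+\lambda u)\|^2=\|s_0-a\|^2-2\lambda\,(s_0-a)\cdot u$, a nonconstant affine function of $\lambda$ precisely because $u^\perp=\mathrm{span}(u)$ in $F^2$ forces $(s_0-a)\cdot u\neq 0$ whenever $s_0\notin\ell_0$. Thus $s_0$ already realises $\geq|S|^{8/15}$ distinct distances and we are done. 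In the opposite case every isotropic line meets $S$ in fewer than $|S|^{8/15}=o(|S|)$ points, so the at most two isotropic lines through any $s$ contribute $o(|S|)$ points, whence $N_s\geq|S|/2$ for all $s$ and $\sum_s N_s^2\gg|S|^3$.

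It remains to bound $\E_\Delta$. Exactly as in dimension three I pass to the restricted energy $\E_\Delta^*$ counting only triples with $t-t'$ non-null. Here the two-dimensional situation is even cleaner: each level set $Z_r(s)=\{t:\ \|s-t\|^2=r\}$ with $r\neq 0$ is a smooth conic passing through the two isotropic points at infinity, so a line meets it in at most two points; in particular no isotropic chord can join two \emph{distinct} affine points of $Z_r(s)$ (such a line would meet the conic at an isotropic point at infinity and at the two affine points, three points in all). Hence the only non-generic pairs counted by $\E_\Delta$ are the diagonal ones $t=t'$, and $\E_\Delta\leq\E_\Delta^*+O(|S|^2)$. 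Since $|S|^2\ll|S|^{37/15}$ it suffices to bound $\E_\Delta^*$. Now I reinterpret it as an incidence count: a non-null pair $(t,t')$ has a well-defined bisector line $\ell_{t,t'}=\{s:\ \|s-t\|^2=\|s-t'\|^2\}$ (its normal $t-t'$ is non-isotropic), and $\E_\Delta^*\leq\sum_{\ell}m(\ell)\,c(\ell)$, where $\ell$ ranges over the set $L$ of distinct bisectors, $c(\ell)=|S\cap\ell|$, and $m(\ell)$ is the number of non-null ordered pairs with bisector $\ell$. Because $t'$ is the reflection $\sigma_\ell(t)$ of $t$ across $\ell$ (well-defined, as the normal is non-isotropic), each $t\in S$ has at most one partner, so $m(\ell)\leq|S|$ and $\sum_\ell m(\ell)\leq|S|^2$.

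Since Theorem \ref{SdZgen} is \emph{sublinear} in the number of lines, I cannot feed it the multiset of bisectors directly, in contrast to the linear-in-$|\Pi|$ bound exploited in Theorem \ref{erd}; instead I dyadically localise in the multiplicity. Choosing a scale $M$ with $m(\ell)\sim M$ that dominates $\E_\Delta^*$ up to the $\lesssim$-absorbed logarithm, set $L_M=\{\ell:\ m(\ell)\sim M\}$, so that $|L_M|\lesssim|S|^2/M$ and $\E_\Delta^*\lesssim M\,|I(S,L_M)|$. Applying Theorem \ref{SdZgen} with $|Q|=|S|$ and $|L_M|\lesssim|S|^2/M$ yields $|I(S,L_M)|\ll(|S|^3/M)^{11/15}+|S|+|S|^2/M$, hence
$$
\E_\Delta^*\lesssim |S|^{33/15}M^{4/15}+M|S|+|S|^2.
$$
Using $M\leq|S|$ in the first two terms bounds every contribution by $|S|^{37/15}$, giving $\E_\Delta^*\lesssim|S|^{37/15}$; combined with the Cauchy--Schwarz lower bound above this produces $D\gg|S|^{8/15}$.

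The step I expect to be the main obstacle is the bookkeeping of the characteristic constraint across the dyadic pieces. Theorem \ref{SdZgen} is valid only for $|Q|^{13}|L_M|^{-2}<p^{15}$; with $|L_M|\sim|S|^2/M$ this reads $|S|^{9}M^2<p^{15}$, which over all admissible $M\leq|S|$ is most restrictive at $M\sim|S|$, where it becomes exactly $|S|^{11}<p^{15}$, i.e. the hypothesis $|S|\leq p^{15/11}$. One must also verify that the dyadic pieces with $|L_M|$ too small to meet this constraint contribute only lower-order terms; there one simply falls back on the trivial bound $|I(S,L_M)|\leq|L_M|\max_\ell c(\ell)\leq|L_M|\,|S|$, which is harmless because such pieces carry few lines. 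This interplay between the sublinearity of the SdZ bound (which forces the multiplicity decomposition) and the threshold $p^{15/11}$ is exactly what makes the two-dimensional argument more delicate than its three-dimensional counterpart.
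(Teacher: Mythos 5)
Your overall strategy coincides with the paper's (which only sketches this proof, deferring details to \cite{SdZ}): reduce the pinned count to the isosceles energy $\E_\Delta$ of \eqref{energystar}, discard null pairs, reinterpret $\E^*_\Delta$ as incidences between $S$ and the multiset of perpendicular bisectors, and apply Theorem \ref{SdZgen} followed by Cauchy--Schwarz. Your preliminary reductions are correct and in places cleaner than the three-dimensional analogue: the rich-isotropic-line dichotomy is handled properly, and your observation that two distinct points on a circle of nonzero radius never form a null pair (so $\E_\Delta=\E^*_\Delta+O(|S|^2)$ outright, with no need for Lemma \ref{easy}) is a nice simplification. The dyadic decomposition in the multiplicity $m(\ell)$ and the computation $M(|S|^3/M)^{11/15}+M|S|+|S|^2\le |S|^{37/15}$ for $M\le|S|$ are also fine.

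The genuine gap sits exactly where you flag it, and your proposed fallback does not close it. The constraint $|S|^{13}|L_M|^{-2}<p^{15}$ fails precisely when $|L_M|<|S|^{13/2}p^{-15/2}$, and for such a class your trivial bound gives a contribution $M|L_M||S|\le|S|\cdot|S|^{13/2}p^{-15/2}\cdot|S|=|S|^{17/2}p^{-15/2}$; at the endpoint $|S|\sim p^{15/11}$ this equals $p^{45/11}=|S|^{3}$, i.e.\ the trivial bound on $\E^*_\Delta$, so the entire gain of $|S|^{8/15}$ is lost. Substituting the Cauchy--Schwarz incidence bound $|I(S,L_M)|\ll|S|\sqrt{|L_M|}+|L_M|$ only rescues the range $|S|\le p^{225/167}$, still short of $p^{15/11}$; so these classes genuinely cannot be dismissed as ``carrying few lines.'' The repair is short once identified: since $|S|\le p^{15/11}$ is equivalent to $|S|^{13/2}p^{-15/2}\le|S|$, you may pad $L_M$ with dummy lines avoiding $S$ up to cardinality $\max(|L_M|,|S|)$; the hypothesis of Theorem \ref{SdZgen} is then met, and the enlarged main term contributes at most $M(|S|\cdot|S|)^{11/15}=M|S|^{22/15}\le|S|^{37/15}$, while the $+|L|$ term contributes $M|S|\le|S|^2$. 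Alternatively --- and this is what \cite{SdZ} actually do in Corollary 13 --- one decomposes over the richness $c(\ell)=|S\cap\ell|$ rather than over $m(\ell)$, using the $k$-rich-line bound that Theorem \ref{SdZgen} yields together with $m(\ell)\le 2|S|$ and $\sum_\ell m(\ell)\le|S|^2$; the two regimes balance at $c(\ell)\sim|S|^{7/15}$ and give the same $|S|^{37/15}$. With either repair your argument is complete.
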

If $F=\F_p$, there is also a large set case estimate, which soon takes over when $|S|$ exceeds $p$. See \cite[Proof of Theorem 2.2]{Io}, \cite[Proof of Theorem 1.6]{mamma}, namely
$$
|\Delta(S)|\gg   \frac{p}{1+ p^2|S|^{-\frac{3}{2}}}\,,
$$
which,  although vacuous for $|S|\leq p$, beats the claim of Theorem \ref{2ddist} already for $|S|\geq p^{\frac{30}{29}}$.  The key quantity behind the latter estimate is the energy-type non-pinned version of equation \eqref{energystar}, that is the variable $s$ in the right-hand side of the three-variable equation in \eqref{energystar} turns into the fourth variable $s'\in S$. For the minimum number of  pinned distances for $|S|\geq p^{\frac{15}{14}}$ in $\F_p$ one can use estimate \eqref{insv}, which yields the existence of $\Omega\left(\frac{p}{1+ p^{\frac{3}{2}}|S|^{-1}}\right)$ distinct distances from some $s\in S$.

\begin{proof}[Sketch of proof of Theorem \ref{2ddist}] Consider equation \eqref{energy}, assuming that at most $\epsilon|S|$ points of $S$ are collinear, for some absolute $\epsilon>0$, or there is nothing to prove. It follows that most pairs $(t,t')$ are not null, and the estimate for the number of solutions of \eqref{energy} is tantamount to estimating the number of incidences of $|S|$ points and a multiset of $\Omega(|S|)$ distinct lines, the total weight of the set of lines being bounded by $|S|^2$. In the worst possible case there are $|S|$ lines with maximum weight $|S|$, to which one applies the first estimate  in \eqref{ins}. The claim of  Theorem \ref{2ddist} follows from the latter incidence bound by the Cauchy-Schwarz inequality.
\end{proof}

\begin{remark} As this manuscript was being prepared, a better bound $\Omega(|S|^{\frac{1128}{2107}})$ if $|S|<p^{\frac{7}{6}}$ for the number of distinct distances defined by a non-isotropic-collinear point set  $S\subset \F_p^2$ was proved by Iosevich et al \cite{IVN}. The improvement is based on the new observation that if there is a line in the proof of Theorem \ref{2ddist}, incident to a large number of points, one can consider distances between points on the line and the rest of $S$. Dealing with the latter distances enables one to take advantage of Theorem \ref{l:corners}, presented in the sequel here. In effect,  \cite{IVN} succeeds in using the incidence bound of Theorem \ref{SdZgen} twice, rather than once.\end{remark}

\section{Additive energy on quadrics} \label{en}

This section discusses some  applications of Theorems \ref{mish} and \ref{SdZgen}, motivated by questions in Fourier analysis. These are geometric incidence applications that constitute its immediate focus, with just a superficial account of how it comes about that Fourier analysis questions get converted to additive energy estimates for sets supported on varieties. Within the scope of this paper, {\em varieties} are limited to two quadrics: the paraboloid and the sphere. Note that Lemma \ref{l:null} evinces that there are two distinct geometric types of  ``the sphere''  $\mathcal S^2_t\subset \F_p^3$. Throughout this section $F=\F_p$.

For finite sets $A, B$ in an abelian group, the energy is defined as
\begin{equation}\label{energy}
\E(A,B) = |\{(x,y,z,u)\in A\times B\times A\times B:\,x+y=z+u\}|\,,\end{equation}
with a shortcut $\E(A)=\E(A,A).$

As a motivation from Fourier analysis we mention that the Fourier approach to the Erd\H os distance problem and its generalisations call for estimating the so-called {\em spherical average}, namely the $L^2$ norm of the restriction of the Fourier transform of the characteristic function of the point set in question on each sphere, centred at the origin \cite{IR}. In \cite{Io}, \cite{mamma} after applying the H\"older inequality the spherical average estimate was converted to an additive energy estimate on the sphere (which was trivial as the sphere was $\mathcal S^1_t$). 

It is easy to show -- see \eqref{f:inc_d-1} and \eqref{f:inc_d} below -- that if the set $A$ lies on the discrete paraboloid or sphere, then (up to a permutation of vertices) the energy equals, respectively, the number of rectangles formed in $F^{d-1}$ by the horizontal projections of the points  $x,y,z,u$ on the paraboloid or the rectangles with vertices $x,y,z,u$ on the sphere in $F^d$. We always assume that $x,y,z,u$ in \eqref{energy} are pair-wise distinct, for alternative scenarios contribute merely $O(|A|^2)$ to the energy. By a {\em rectangle} we mean a point quadruple, such that the dot products of adjacent difference vectors is zero at every vertex $x,y,z,u$ -- see \eqref{f:inc_d-1}, \eqref{f:inc_d}. The concept of {\em adjacent} vertices arises after rearranging equation \eqref{energy}. E.g., rewriting it as $x=z+u-y$, one concludes that vertices $z$ and $u$ are adjacent to $y$; this also enables one to distinguish the four {\em sides} from two {\em diagonals}.  A rectangle lies in a two-plane, and {\em plane} will always mean a (affine) two-plane in the sequel. The Euclidean shibboleth {\em the line containing a side} is used for a line supporting a pair of adjacent vertices of the rectangle.

We consider the dimension $d=3,4$. The rectangles we encounter are of three types (in $d=4$ we deal only with rectangles, whose four vertices lie on the sphere). The first type is {\em ordinary} rectangles, that is lines along all the four sides are non-isotropic. The opposite case is  {\em degenerate} rectangles, namely when both sides, adjacent to a vertex are isotropic vectors. This implies, both in $F^3$ and for $x,y,z,u\in \mathcal S_t^3$ that all the four vertices all lie on the same isotropic line. Even though in $F^4$ one can have (fully) isotropic planes, that is self-orthogonal planes, such a plane cannot be tangent to a sphere $\mathcal S^3_t$, $t\neq 0$, for otherwise $x$ itself would be isotropic, that is $t=0$.

Members of the third rectangle type to be dealt with are  {\em semi-degenerate} rectangles, namely when lines, containing one pair of opposite sides are isotropic, and for the other pair of sides -- non-isotropic.  Given the line $l$ containing the isotropic side of such a rectangle, the rectangle must lie in the unique semi-isotropic (hyper)plane $l^\perp$, containing $l$.


\medskip

We proceed to introduce the discrete paraboloid case. A more detailed account of the restriction problem thereon can be found in the author's paper with Shkredov \cite{RSh}.

Set $V=F^d$, and define the discrete paraboloid 
$$
	\mathcal{P}^{d-1} := \{ (\u x=(x_1,\ldots, x_{d-1}),\, \u x\cdot \u x) ~:~ \u x\in F^{d-1} \} = \{ (x_1,\ldots x_{d-1}, x^2_1 + \ldots +  x^2_{d-1}) \} \subset V \,.
$$
For a vector $x = (x_1,\dots, x_d)\in F^d$  write $x=(\u x,h)$, referring to $\u x$ and $h$ as, respectively, horizontal and vertical coordinates, and use $\u A$ to denote the horizontal projection of $A\subseteq \mathcal P^{d-1}$: $A$ is a graph over $\u A$, and $|\u A|=|A|.$

Note that  $\mathcal{P}^{d-1}$ contains  the isotropic cone $\mathcal S_0^{d-2}$ in the horizontal hyperplane $x_d=0$. Moreover, at every point $x=(\u x, \|\u x\|^2)$ it intersects its tangent space at $x$ in a two-dimensional cone that projects on the coordinates $\u x$ as a corresponding translate of $\mathcal S_0^{d-2}$. 

\medskip
The Fourier extension problem is bounding some Lebesgue norm on $F^d$, $d\geq 2$ of the inverse Fourier transform of a complex-valued function $f$ on some variety in the dual space. An equivalent question is bounding the norm of the restriction to a variety of the Fourier transform of a function $g$ on $F^d$ in terms of the norm of $g$. Overwhelmingly, ``some variety'' means an irreducible quadric.
If $F=\R$, the restriction problem has a reputed history, which is beyond the scope of this review. Since the 2000s, after having been set up by Mockenhaupt and Tao \cite{MoT}, the question has also been studied in the finite field setting. It is most approachable -- owing to the forthcoming Lemma \ref{l:g^-energy} -- if the quadric is $\P^{d-1}$.

\begin{remark} There is more to the questions discussed further than $\F_p$-versions of open questions in real harmonic analysis. In 2003 Bourgain \cite[Section 3]{Bo} showed how an energy estimate for a set on $\mathcal{P}^{2}$ can be recycled to yield an explicit  low-entropy {\em two-source extractor} for simulating independence in computer science. For more details and references see the recent work of Lewko \cite{Lewko} which specifically uses the energy estimates of \cite{RSh} presented below for this purpose. Moreover, as it has  already been mentioned, energy estimates on $\P^2$ were used by Iosevich et al  \cite{IVN} to get the best known result on the number of distinct distances for sufficiently small sets in $\F_p^2$.\end{remark}

Mockenhaupt and Tao \cite[implicit in proof of Theorem 6.2]{MoT} showed that one can express restriction estimates to $L^2(\mathcal P^{d-1})$ in terms of energy estimates for sets, supported on $\mathcal P^{d-1}$. 

For a set $S\subseteq V$, with the vertical coordinate $h\in F$, define $S_h \subseteq \mathcal{P}$ as the horizontal $h$-slice of $S$, lifted to $\mathcal P^{d-1}$, that is
$$
S_h := \{ (\u x,\u x\cdot \u x): \,(\u x,h)\in S\}\,.
$$
In this context let us present (a variant of) \cite[Lemma 2.1]{IKL}, where a thorough sketch of the proof, which uses the Plancherel identity, H\"older inequality and Gauss sums. See \cite[Corollary 25, Lemma 29]{Lewko} for more details and references as to the following lemma. 

\begin{lemma}
	\label{l:g^-energy}
	Let $g: \F_p^d \to \mathbb{C}$ be a function such that $\| g\|_\infty \leq 1$ on its support $S$. 
	Then for its Fourier transform $\hat{g}$ one has
	\begin{equation} \label{e:g^-energy}
	\| \hat{g} \|_{L^2 (\P^{d-1},d\sigma)} \ll |S|^{\frac{1}{2}} + |S|^{\frac{3}{8}} 
	p^{-\frac{d-2}{8}}
	\left( \sum_{h\in \F_p} \E^{\frac{1}{4}} (S_h) \right)^{\frac{1}{2}} \,.
	\end{equation}
\end{lemma}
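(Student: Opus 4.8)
The plan is to evaluate $\|\hat g\|_{L^2(\mathcal P^{d-1},d\sigma)}^2$ exactly and then estimate the result with Gauss sums, following the three advertised ingredients. Writing $x=(\u x,h)\in\F_p^d$ and letting $\widehat{g_h}$ denote the $(d-1)$-dimensional Fourier transform of the slice $g_h(\u x):=g(\u x,h)$, the restriction of $\hat g$ to the paraboloid factors as $\hat g(\u\xi,\u\xi\cdot\u\xi)=\sum_h e_p(-h\,\u\xi\cdot\u\xi)\,\widehat{g_h}(\u\xi)$, where $e_p(t)=e^{2\pi i t/p}$. Squaring this and averaging over $\u\xi\in\F_p^{d-1}$ against the normalised surface measure expresses $\|\hat g\|_{L^2(d\sigma)}^2$ as a double sum over heights $h,h'$ whose inner $\u\xi$-sum is, up to normalisation, the Fourier transform of $d\sigma$ at the difference of the two lifted points; I would evaluate this inner sum by completing the square.

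The Gauss-sum evaluation then splits according to the vertical difference $c=h-h'$. For $c=0$ the inner sum is a delta in the horizontal variable, so the diagonal $h=h'$ contributes exactly $\sum_x|g(x)|^2\le|S|$, which after taking square roots is the first term $|S|^{1/2}$. For $c\ne0$ the quadratic Gauss sum in $d-1$ variables has modulus $p^{(d-1)/2}$ and leaves behind the unimodular chirp phase $e_p(\|\u u-\u v\|^2/(4c))$; combined with the $p^{-(d-1)}$ from the normalisation this produces the overall saving $p^{-(d-1)/2}$. Everything thus reduces to the \emph{curved part}: an oscillatory sum over pairs of slices weighted by this chirp.

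The energies enter precisely through the chirp. Dropping it and applying Cauchy--Schwarz would bound the curved part only by the full horizontal additive energies of the slices, that is by all additive quadruples rather than by rectangles; the purpose of retaining the chirp is that its phase $\|\u u-\u v\|^2$ is exactly the norm balance that cuts the count down to the paraboloid rectangle energy $\E(S_h)$. Concretely I would recognise the chirped slice sums as paraboloid extension data, whose fourth powers are the rectangle energies via the $L^4$--energy identity, so that Hölder between the $L^2$ and $L^4$ information of each slice supplies the factors $\E^{1/4}(S_h)$ (the hypothesis $|g|\le1$ on $S$ being used only to pass from the weighted to the set energies). A final Hölder/Cauchy--Schwarz over the height variable, using $\sum_h|S_h|=|S|$, then assembles $\sum_h\E^{1/4}(S_h)$ and releases the remaining powers $|S|^{3/8}$ and $p^{-(d-2)/8}$.

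The step I expect to be the crux is this curved part, and specifically extracting the rectangle energy $\E(S_h)$ rather than the coarser unrestricted energy. One must spend the chirp's oscillation so that the Gauss-sum cancellation simultaneously yields the factor $p^{-(d-1)/2}$ and installs the norm-balance constraint defining rectangles. The naive estimate that discards the chirp yields only a clean but weaker bound, quadratic in the coarser unrestricted slice energies rather than linear in $\sum_h\E^{1/4}(S_h)$; the stated bound, which is the stronger one when $\sum_h\E^{1/4}(S_h)$ is large, requires the chirp to be retained and balanced against the $L^4$ input before the heights are summed. Arranging this trade-off so that the exponents land exactly on $3/8$ and $-(d-2)/8$ is the technical heart, and is where I would follow the detailed bookkeeping of Lewko.
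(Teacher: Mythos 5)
The paper does not actually prove this lemma: it is quoted as a variant of \cite[Lemma 2.1]{IKL}, with the reader sent to \cite[Corollary 25, Lemma 29]{Lewko} and the proof summarised only as ``Plancherel identity, H\"older inequality and Gauss sums.'' Your outline follows exactly that route --- slice $g$ by the vertical coordinate, sum the quadratic phase over $\u\xi$ to get the diagonal term $|S|$ plus a chirped off-diagonal part carrying the Gauss-sum saving $p^{-(d-1)/2}$, identify the fourth moment of the chirped slice data with the rectangle energy $\E(S_h)$ of the lifted slice, and then H\"older over the heights --- so it is consistent with the cited proofs and with the paper's description of them. The only caveat is that, as you yourself acknowledge, the decisive bookkeeping (the H\"older steps in $c$ and $h$ that actually land on the exponents $\tfrac38$ and $-\tfrac{d-2}{8}$) is deferred to \cite{Lewko} rather than carried out, so your text is a correct plan rather than a self-contained proof; since the paper itself defers to the same source, this matches its level of detail.
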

Lemma \ref{l:g^-energy} bounds the $L^2$ norm of the restriction of the Fourier transform of a function $g$ on $V$ to $\P^{d-1}\subset V^*$, the dual space, in terms of energies of the horizontal slices of  the support of $g$, lifted on $\P^{d-1}\subset V$. Energy estimates presented below, after having been plugged into \eqref{e:g^-energy} and interpolated with the so-called Stein-Tomas type bounds for the Fourier restriction problem, enabled Shkredov and the author  \cite{RSh} to prove the best possible $L^2(\P^3)$ restriction estimate over $\F_p$. For more details see \cite{RSh} and the references contained therein. We now move on to the energy estimates per se.

\subsection{Discrete paraboloid}
	
We start out with an easy but important observation concerning \eqref{energy}, that if $x,y,z\in \P^{d-1}$, then $u=x-z+y$ is in $\P^{d-1}$, that is
$(\u x- \u z+\u y)\cdot (\u x- \u z+\u y) = \u x \cdot \u x - \u z \cdot \u z + \u y \cdot \u y$,
 if and only if 

\begin{equation}\label{f:inc_d-1}
	(\u x - \u z) \cdot (\u z - \u y) = 0\,.
\end{equation} 
The same relation holds for any of the four triples of adjacent variables in the definition of energy \eqref{energy}, yielding a simple geometric criterion: a quadruple $(x,y,z,u)\in \mathcal (\P^{d-1})^4$ satisfies \eqref{energy} if and only if 
$(\u x,\u y, \u z,\u u)\in (F^{d-1})^4$ is a {\em rectangle}, as opposed to generally being a parallelogram to form an additive quadruple just in $(F^{d-1})^4$. Namely at each vertex $\u x,\u y, \u z,\u u$, the 
dot products of adjacent difference vectors is zero. Note that condition \eqref{f:inc_d-1} should hold at every vertex of the rectangle, hence finding all solutions of the latter relation applies to each geometric rectangle at least four and at most sixteen times. In the sequel we will use this freedom to chose the convenient corner $(x,y,z)$ of the rectangle and the fact that the energy is bounded by a constant times the number of geometric rectangles.

\subsubsection{The case $d=4$}

In this case $\P^3$ meets a tangent hyperplane at any point in a two-dimensional cone, made of isotropic lines.
Clearly if $A$ is supported on one of the isotropic lines, $\E(A)$ can be as big as $|A|^3$ or if $|A|\geq p$, then $|\E(A)|\gg |A|p^2$. In general, there is the following theorem.

\begin{theorem}
\label{l:par_energy} 
	Let $A\subseteq \mathcal{P}^3$. 
	Then
\begin{equation}\label{f:E_par_lemma}
\E(A) \ll \frac{|A|^3}{p} + |A|^{\frac{5}{2}} + |A|k_0^2 \,.
\end{equation}
where $k_0$ is the maximum number of points of $A$ on an isotropic line.	

\end{theorem}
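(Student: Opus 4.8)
The strategy is to convert the energy count $\E(A)$ into a weighted point-plane incidence problem in $\Pro^3$ via the rectangle criterion \eqref{f:inc_d-1}, and then apply the weighted incidence bound of Theorem \ref{wmish}. By the discussion following \eqref{f:inc_d-1}, up to an absolute constant $\E(A)$ equals the number of geometric rectangles $(\u x,\u y,\u z,\u u)$ in $F^3$ formed by the horizontal projections of points of $A$, where at each vertex the adjacent difference vectors are orthogonal. Fixing the corner $(\u x, \u y, \u z)$ satisfying $(\u x - \u z)\cdot(\u z - \u y)=0$ and summing over all admissible triples counts each rectangle boundedly many times, so it suffices to bound the number of such triples.

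\textbf{Setting up the incidence problem.} First I would separate off the \emph{degenerate} contribution: triples where $\u x - \u z$ and $\u z - \u y$ are \emph{both} isotropic, which (as explained in the rectangle classification) forces all points to lie on a common isotropic line in the horizontal $F^3$. If $k_0$ is the maximum number of points of $A$ on an isotropic line, the number of such degenerate triples is $\ll |A| k_0^2$, accounting for the last term of \eqref{f:E_par_lemma}. For the remaining triples, at least one of the two difference vectors is non-isotropic; say $\u z - \u y$ is non-isotropic. Then, for each fixed non-null pair $(\u y, \u z)$, the condition $(\u x - \u z)\cdot(\u z - \u y)=0$ says that $\u x$ lies on a fixed plane in $F^3$ (the plane through $\u z$ normal to $\u z - \u y$). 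This naturally presents the count as an incidence problem between the point set $\u A \subset F^3$ and a multiset $\Pi$ of planes indexed by the pairs $(\u y, \u z)$, where a single plane may be repeated according to how many pairs produce it, giving a weighted plane set of total weight $\ll |A|^2$.

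\textbf{Applying the weighted bound.} I would then invoke Theorem \ref{wmish} with point weights $1$ and plane weights bounded by some $w_0$, noting that the total weight $W \sim |A|^2$, and that the relevant collinearity parameter $k$ for the \emph{points} is controlled by the maximum number of collinear points of $\u A$. Incidences along isotropic lines should be removed via the $L^*$-version of \eqref{pupsweight}, so that the effective parameter $k^*$ only counts points on \emph{non-isotropic} lines, which by the non-collinearity of a generic slice is typically bounded by $\sqrt{|A|}$; absorbing this into the estimate produces the $|A|^{5/2}$ term, while the main $|A|^3/p$ term arises from the ``large-set'' regime where $W/w_0$ approaches $p^2$ and the statistical $|Q||\Pi|/p$ main term of Theorem \ref{Misha+} dominates. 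Balancing these three sources — the degenerate isotropic-line count, the incidence-theoretic $\sqrt{|A|}$ term, and the large-set $p^{-1}$ term — yields \eqref{f:E_par_lemma}.

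\textbf{Main obstacle.} The delicate point I expect is correctly tracking the weights and the collinearity parameter simultaneously. A plane in $\Pi$ can carry large multiplicity (when many pairs $(\u y,\u z)$ share the same bisecting/normal plane), and a line in $F^3$ can be rich in points of $\u A$; the bound of Theorem \ref{wmish} is quadratic in $w_0$ through the $k w_0$ term, so one must argue that after excluding the isotropic-line contribution the remaining $k^*$ is genuinely small, and that the weight $w_0$ does not force the small-set constraint $W/w_0 < p^2$ to fail. Reconciling the need to keep $w_0$ large enough to have many incidences with the need to keep it small enough for the theorem to apply — i.e., justifying the passage between the small-set estimate and the large-set main term at the threshold $|A|\sim p^{2/3}\cdot(\text{weight factors})$ — is where the technical care is required.
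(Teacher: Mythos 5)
Your overall frame --- reducing the energy to the count of right-angled corners $(\u x - \u z)\cdot(\u z - \u y)=0$ in the projection $\u A\subset F^3$, feeding this to the point--plane machinery, and splitting off the fully isotropic contribution --- matches the paper's starting point, but there is a genuine gap at the step where you dispose of rich lines. After excluding incidences along isotropic lines you assert that the remaining parameter $k^*$, the maximum number of points of $\u A$ on a non-isotropic line, is ``typically bounded by $\sqrt{|A|}$ by the non-collinearity of a generic slice.'' Nothing forces this: $\u A$ is an arbitrary subset of $F^3$ and may have up to $|A|$ points on a single non-isotropic line, in which case the incidence bound returns $k^*|A|^2\sim |A|^3$ and the argument collapses. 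The paper must treat this case separately: it excludes \emph{all} lines supporting $\geq 10\sqrt{|A|}$ points (isotropic or not), obtaining $\E_1\ll |A|^3/p+|A|^{5/2}$ for rectangles having a side off these rich lines; then, for rectangles with a side on a rich \emph{non-isotropic} line $l$, it uses the Euclidean observation that a diagonal together with such a line determines the rectangle, so $\E(A_l,A)\ll |A_l||A|$, and sums over the $\ll\sqrt{|A|}$ rich lines by Cauchy--Schwarz to get another $|A|^{5/2}$. This second mechanism is indispensable and absent from your proposal. Relatedly, only rectangles \emph{all} of whose sides lie on rich isotropic lines are degenerate, and it is for those (not for all isotropically collinear triples) that the bound $|A|k_0^2$ holds: over \emph{all} isotropic lines your triple count is only $O(|A|^2k_0)$ in general, since a point of $\u A$ may lie on up to $p+1$ isotropic lines; the paper gets $|A|k_0^2$ precisely because the rich isotropic lines number $\ll\sqrt{|A|}$ and pairwise meet in at most one point.

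Two smaller points. First, Theorem \ref{wmish} applied with total weight $W\sim|A|^2$ yields $I_w\ll W\sqrt{w_0W}\geq |A|^3$, which is useless here; the correct device is the remark following Theorem \ref{mish} (or Theorem \ref{Misha+}) that the bound is linear in $|\Pi|$, so the multiset of $|A|^2$ planes may be used with $|Q|=|A|$ \emph{provided the number of distinct planes is} $\gg|A|$ --- and this is where Lemma \ref{easy} enters: a positive proportion of pairs $(\u y,\u z)$ are non-null and give distinct normal directions unless $\Omega(|A|)$ points sit on one isotropic line, a case absorbed into $|A|k_0^2$ by an iterative removal. Second, the $|A|^3/p$ term is simply the statistical main term of Theorem \ref{Misha+}, not an artefact of the constraint $W/w_0<p^2$ failing; no balancing at a threshold in $|A|$ versus $p$ is needed.
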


\begin{proof}

A solution of equation (\ref{f:inc_d-1}) with all variables in $A$ can be interpreted as a point-plane incidence in $F^3$: $x$ being the point and the plane $\pi$ being the one	passing through $\u z$ and with the normal vector $\u y- \u z$. In the sequel we assume that $\u y\neq \u z$, for otherwise we have $|A|^2$ trivial solutions to the energy equation.
	
	The set of planes is, in fact, a multiset, but  linearity of incidence estimate \eqref{pups} of Theorem \ref{mish} in $|\Pi|$ makes it still apply with $|\Pi|=|A|^2$ and $|Q|=|A|$ as long as the number of distinct planes is $\gg |A|$. But this follows from the pigeonhole principle and Lemma \ref{easy}, unless a large proportion of $A$ lies on some isotropic line, in which case the last term in estimate \eqref{f:E_par_lemma} will do the job after a straightforward iterative procedure of removing that part of $A$ and continuing, the details being left to the reader.

Otherwise we apply Theorem \ref{Misha+}  obtaining an intermediate bound   
$$
\E(A) \ll \frac{|A|^3}{p} + |A|^{\frac{5}{2}} + k|A|^2  \,,
$$
where $k$ is the maximum number of collinear points in $\u A$. Clearly, if $k\ll \sqrt{|A|}$, we are done.

Otherwise, we proceed as follows.  Let $L$ be the set of all lines in $F^3$, supporting, say $\geq 10 \sqrt{|A|}$ points of $\u A$. By excluding the incidences along the lines in $L$ in the application of Theorem \ref{Misha+}, we succeed in counting all the rectangles in $\u A$, such that the line, containing at least one side of the rectangle is not in $L$. The number $\E_1$ of such rectangles therefore obeys the bound

\begin{equation}\label{f:E_par1}
\E_1 \ll \frac{|A|^3}{p} + |A|^{\frac{5}{2}} \,.
\end{equation}

 Let $L'\subseteq L$ be the subset of  non-isotropic lines and $\u A'$ the subset of $\u A$, supported on the union of these lines, and $A'$ its lift on $\mathcal P$. By the exclusion-inclusion principle, $|L'|\ll  \sqrt{|A|}$. 
We now bound $\E(A',A)$. This count will include all ordinary and semi-degenerate rectangles, the non-isotropic line supporting one of whose sides contains $\geq 10 \sqrt{|A|}$ points of $\u A$. Let $\E_2\leq \E(A',A)$ be the number of such rectangles.

Let $\u A_l := l \cap \u A'$, for a line $l\in L'$, $A_l$ denoting the lift  of $\u A_l$ on $\mathcal P$.

It is easy to see that
$$\E(A_l,A) = |\{ ( x,  y,  z, u) \in A\times A_l\times A\times A_l:\;  x+  y =  z + u\}|\ll |A_l||A|\,,$$
for if we fix a diagonal, say $xy$ of a rectangle, whose one side lies on a non-isotropic line, this fixes the remaining two vertices, the Euclidean way. Indeed, since the line in question is non-isotropic, there is a unique $z$ on this line to satisfy the orthogonality condition \eqref{f:inc_d-1}.

It follows by two applications of the Cauchy-Schwarz inequality that
\begin{equation}\label{f:E_par2}
	\E_2 \le \left( \sum_{l\in L'} \E(A,A_l)^{\frac{1}{2}} \right)^{2} \ll |A| \left(\sum_{l\in L'} |A_l|^{\frac{1}{2}} \right)^{2}
		\le
			|A| |L'| |A'| \; \ll \; |A|^{\frac{5}{2}}.
\end{equation}

It remains to count rectangles, all whose sides lie  on isotropic lines in $L$. Since there are no distinct mutually perpendicular pairs of isotropic directions in $F^3$, such rectangles can only be degenerate. If $\E_3$ is the number of such rectangles,
and it's easy to see that $\E_3 \ll |A|k_0^2$, in the worst possible case of $|A|/k_0$ isotropic lines with $k_0$ points on each.

Combining this bound with bounds \eqref{f:E_par1}, \eqref{f:E_par2}, since $\E(A)\ll\E_1+\E_2+\E_3$, completes the proof of Theorem \ref{l:par_energy}. 
\end{proof}

\subsubsection{The case $d=3$.} If $d=3$, the energy estimate on $\P^2$ is as follows.

\begin{theorem}\label{l:corners}
For $A\subseteq \mathcal{P}^2,$ with at most $k_0$ points on an isotropic line. One has
\begin{equation}\label{f:E2}
\E(A) \;\ll \;|A|k_0^2+\left\{ \begin{array}{lll} |A|^{\frac{17}{7}}\,, &for & |A|<p^{\frac{26}{21}}\,,\\ \hfill \\
 \frac{|A|^3}{p}  + |A|^2\sqrt{p}\,.\end{array}\right.
\end{equation}\end{theorem}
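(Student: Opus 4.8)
The plan is to mirror the proof of Theorem \ref{l:par_energy}, with the essential difference that for $d=3$ the horizontal projections $\u A$ live in $F^2$, so the rectangle criterion \eqref{f:inc_d-1} now encodes a point--\emph{line} incidence problem in $F^2$ rather than a point--plane one in $F^3$. Concretely, I fix the corner at $\u z$ and read \eqref{f:inc_d-1} as: $\u x$ is a point of $\u A$, while $(\u x-\u z)\cdot(\u y-\u z)=0$ says that $\u x$ lies on the line through $\u z$ normal to $\u y-\u z$. Thus, up to the bounded overcount from the four corners, the number of rectangles is at most the weighted incidence count between the $|A|$ points $\u A$ and the multiset of $\le|A|^2$ such normal lines, of total weight $|A|^2$; the multiplicity of a line equals the number of collinear points of $\u A$ it records, so the weights are governed by the line-richness of $\u A$. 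A useful simplification over the $d=4$ case is that in $F^2$ the orthogonal complement of an isotropic direction is that same direction, so a rectangle with one isotropic side is forced to have all four sides isotropic, hence degenerate with all four vertices on one isotropic line; there are no semi-degenerate rectangles. By Lemma \ref{easy} and the pigeonhole principle I peel these degenerate rectangles off: on an isotropic line with $j$ points of $\u A$ the lift is itself a line and the one-dimensional energy is $\le j^3$, so $\sum_\ell j_\ell^3\le k_0^2\sum_\ell j_\ell\ll|A|k_0^2$, producing the first term of \eqref{f:E2}.

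For the ordinary rectangles in the small-set regime I would invoke Theorem \ref{SdZgen}. The naive route --- assuming the weight concentrates in the worst way, as $|A|$ lines of weight $|A|$ --- only yields $|A|\cdot(|A|^2)^{11/15}=|A|^{37/15}$, which is weaker than the claimed bound (note $|A|^{37/15}=|A|^{259/105}$ whereas $|A|^{17/7}=|A|^{255/105}$). To gain the improvement I first remove the non-isotropic lines of $\u A$ carrying more than a threshold $\tau$ points: their rectangle contribution is controlled by the Cauchy--Schwarz device behind \eqref{f:E_par2}, since fixing a diagonal of a rectangle with a side on a non-isotropic line $g$ pins the remaining two vertices, giving $\E(A_g,A)\ll|A||A_g|$, and summing over the $\ll|A|^2/\tau^2$ rich lines yields $\ll|A|^5/\tau^3$. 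The surviving incidences have all multiplicities and collinearities bounded by $\tau$, and the weighted form of Theorem \ref{SdZgen} (via the rearrangement underlying Theorem \ref{wmish}) bounds them by $\ll|A|^{11/5}\tau^{4/15}$ plus lower-order terms. Balancing the two contributions at $\tau\sim|A|^{6/7}$ makes both equal to $|A|^{17/7}$, and the restriction $|A|<p^{26/21}$ is the range in which the relevant instance of Theorem \ref{SdZgen} stays valid.

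In the complementary large-set regime I would replace Theorem \ref{SdZgen} by Vinh's spectral bound \eqref{insv}, applied in weighted form to the same $|A|$ points and multiset of normal lines of total weight $|A|^2$: the main term $|Q||L|/p$ becomes $|A|^3/p$, while the dispersion term, in the worst case of weight $|A|$ concentrated on $\sim|A|$ distinct lines, contributes $|A|\cdot\sqrt{p|A|\cdot|A|}=|A|^2\sqrt p$, giving the second line of \eqref{f:E2}. Estimate \eqref{f:E2} then follows by combining the degenerate, the small-set ordinary, and the large-set counts. The main obstacle is precisely the middle step: extracting the exponent $17/7$ rather than the off-the-shelf $37/15$ forces a genuinely weighted incidence analysis, in which the rich non-isotropic lines must be excised at exactly the right threshold and their rectangles counted by a separate elementary argument, and this optimisation has to be reconciled with the $p$-constraint of Theorem \ref{SdZgen} so as to produce the clean cutoff $p^{26/21}$ separating the two cases.
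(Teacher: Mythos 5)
Your reduction to counting right angles in $\u A\subset F^2$, your treatment of the degenerate rectangles (correctly using that in $F^2$ an isotropic direction is its own orthogonal complement, so there are no semi-degenerate rectangles and the isotropic contribution is $\ll |A|k_0^2$), and your large-set appeal to \eqref{insv} all match the paper, and you have rightly diagnosed that a naive weighted application of Theorem \ref{SdZgen} only yields $|A|^{37/15}$. The gap is in the step meant to do better. You assert that, after excising non-isotropic lines with more than $\tau$ points, the multiset of normal lines has all multiplicities bounded by $\tau$, so that the rearrangement behind Theorem \ref{wmish} gives $\ll\tau\cdot(|A|\cdot|A|^2/\tau)^{11/15}=|A|^{11/5}\tau^{4/15}$. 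That is false: a geometric normal line $\ell$ arises once for \emph{every} point $\u z\in\ell\cap\u A$, each time with multiplicity $n(\ell_{\u z}^{\perp})$ (the richness of the perpendicular to $\ell$ at $\u z$), so its total weight is $\sum_{\u z\in\ell\cap\u A}n(\ell_{\u z}^{\perp})$, which after your pruning is bounded only by $\tau^2$, not by $\tau$. With maximum weight $\tau^2$ the rearrangement gives $\ll\tau^2(|A|^3/\tau^2)^{11/15}=|A|^{11/5}\tau^{8/15}$; this is $\leq|A|^{17/7}$ only for $\tau\leq|A|^{3/7}$, while your Cauchy--Schwarz device in the style of \eqref{f:E_par2} controls the rich lines within $|A|^{17/7}$ only for $\tau\geq|A|^{6/7}$. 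No single threshold satisfies both; balancing $|A|^5/\tau^3$ against $|A|^{11/5}\tau^{8/15}$ gives at best $|A|^{139/53}$, which falls short of $|A|^{17/7}$. The product structure $n(l)n(l^\perp)$ is destroyed when the weight is aggregated onto geometric lines, and that is precisely the information your scheme needs.

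The paper closes this range by never forming a weighted point--line incidence problem. It writes $N=\sum_{\u z}\sum_{l\in L_{\u z}}n(l)n(l^\perp)$ and uses a \emph{three}-tier split by richness, with thresholds $k_*=n^{3/7}$ and $n^{6/11}$ (where the two terms of the rich-line bound \eqref{k-rich}, the dual form of Theorem \ref{SdZgen}, cross). Poor lines give $\leq 2k_*n^2=n^{17/7}$ trivially; in the middle range one applies $n(l)n(l^\perp)\leq n(l)^2+n(l^\perp)^2$, reducing to collinear triples, and dyadic summation of $k^3\cdot n^{11/4}k^{-15/4}$ over $k\in[n^{3/7},n^{6/11}]$ gives $n^{11/4}k_*^{-3/4}=n^{17/7}$; the very rich lines are handled by the elementary observation that a very rich line $l$, a point $\u z\in l$ and any $\u y\notin l$ determine the third vertex $\u x\in l$, so their contribution is $n\sum_{l:\,n(l)\geq n^{6/11}}n(l)\ll n^{9/4}\log n$ by the second term of \eqref{k-rich}. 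Your argument is missing an analogue of this last, very-rich regime: without it the lines of richness between roughly $n^{3/7}$ and $n^{6/7}$ are not controlled, and the exponent $17/7$ is not reached.
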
	
Observe that if $p= 3\pmod 4$, the paraboloid contains no lines, in which case one can set $k_0=2$. 
Otherwise, for $p\equiv  1 \pmod 4$, $\mathcal{P}^{2}$ is doubly ruled by isotropic lines, and then the energy estimate shall inevitably have the term $|A|k_0^2,$ where $k_0$ is the maximum number of points in the projection of $A$ on the first  two variables on an isotropic line.

\begin{proof} In light of what has just been said, it suffices to assume $p=3\pmod 4$, so $\P^2$ contains no lines.  We restate equation \eqref{f:inc_d-1}, aiming to bound the number of solutions of 
\begin{equation}
(\u x- \u z)\cdot(\u z- \u y)=0:\;\;\u x,\u y, \u z\in \u A\,,
\label{right}\end{equation} where $\u A$ is the projection of $A$ on the $(x_1,x_2)-$plane.  Let us set $|A|=|\u A|=n$.

Equation \eqref{right} is a well-known problem of counting the maximum number of right triangles with vertices in the plane point set $\u A$, which in the real case was given a sharp answer by Pach and Sharir \cite{PS} via the Szemer\'edi-Trotter theorem. Here we adapt the argument in order to use the Stevens-de Zeeuw incidence bound \eqref{ins} instead.

Note that estimate  \eqref{insv} provides a universal bound
\begin{equation}\label{uni}
\E(A)\ll \frac{|A|^3}{p} + |A|^2\sqrt{p}.
\end{equation}

Let us recast bound \eqref{ins} in the usual way, aiming at the cardinality $m_k$ of the set of $k$-rich lines, that is lines, supporting $\geq k$ points of a $n$-point set:
$$
m_k  \ll  \frac{n^{\frac{11}{4}}} {k^{\frac{15}{4}}} + \frac{n}{k} + \frac{n^{\frac{13}{2}}}{p^{\frac{15}{2}}}\,,
$$
The third term in the bound arises as the alternative to the constraint $n^{13}m_k^{-2}< p^{15}$ of Theorem \ref{SdZgen}.

One may loosen the latter bound by subsuming its last term in the increased middle one (clearly, $k\leq p$):

\begin{equation}\label{k-rich}
m_k\ll  \frac{n^{\frac{11}{4}}} {k^{\frac{15}{4}}} + \frac{n^{\frac{5}{4}}}{k}\,,\qquad for \;\;\; n<p^{\frac{26}{21}}\,.
	\end{equation}

Next we are going to show that the number $N$ of nontrivial solutions (that is with $\u x\neq \u z$ and $\u y\neq \u z$) of equation \eqref{right} satisfies the following bound:
\begin{equation}\label{thin}
for \;\;\; n< p^{\frac{26}{21}}, \qquad N\ll n^{2+\frac{3}{7}}.
\end{equation}

Assuming $n< p^{\frac{26}{21}}$, let us express the quantity $N$  as follows. For $\u z\in A$, define $L_z$ as the set of all the $p+1$ lines in $F^2$ incident to $z$. For any line $l$ in $F^2$, let $n(l)$ be the number of points of $\u A$, supported on $l$ {\em minus 1}. Then
\begin{equation}\label{f:N}
N = \sum_{\u z \in \u A}\, \sum_{l\in L_z} n(l) n(l^\perp),
\end{equation}
where $l^\perp$ is the line orthogonal to $l$.

Let us set up a cut-off value $k_*= n^{\frac{3}{7}}$ of $n(l)$ to be justified. Partition, for every $z$, the lines $l\in L_z$ to poor ones, that is those with $n(l) \leq k_*$, and otherwise rich. 
Accordingly partition $N=N_{poor} + N_{rich}$, where  the term $N_{poor}$ means that at least one of $l,l^\perp$ under summation in \eqref{f:N} is poor, hence the alternative $N_{rich}$ is when both $l,l^\perp$ are rich. Clearly
$$
N_{poor} \leq 2k_* n^2.
$$
Let us now bound $N_{rich}$. Observe that the two terms in estimate \eqref{k-rich} meet when 
 $k\sim n^{\frac{6}{11}}$. Let us call the lines with $n(l) \geq n^{\frac{6}{11}}$ {\em very rich} and partition
 $$N_{rich} = N_{very-rich}+N_{just-rich}\,,$$
 the first term corresponding, for each $\u z$, to the sub-sum, corresponding to the case when one of $l,l^\perp$ is very rich.  Then one can bound $N_{very-rich}$ trivially, using the second term in \eqref{k-rich} and dyadic summation in  $k\geq n^{\frac{6}{11}}$  as
\begin{equation}\label{e:vr}
N_{very-rich}\ll n \sum_{l:\,n(l)\geq n^{\frac{6}{11}}} n(l) \ll n^\frac{9}{4}\log n,
\end{equation}
 which is better than 
 \eqref{thin}. 
 Indeed, given a very-rich line $l$ we count all triangles with vertices $\u x, \u z,\u y$, such that $\u z\in l$ and $\u y$ is any point outside $l$; the two will determine the third vertex $\u x\in l$.  

What is left to consider is the case of the summation in \eqref{f:N} when both  $n^{\frac{3}{7}}\leq n(l),\,n(l^\perp)\leq n^{\frac{6}{11}}$. We apply Cauchy-Schwarz to obtain 
$$
N_{just-rich}  \leq \sum_{\u z \in \u A} \;\;\sum_{l\in L_z:\,p^{\frac{3}{7}}\leq n(l) \leq n^{\frac{6}{11} } } n^2(l) .$$
The expression in the right-hand side counts collinear triples of points in $\u A$ on rich, but not very rich lines. The number of such lines with $n(l)\sim k$, for the range of $n(l)$ in question is bounded by the first term in estimate \eqref{k-rich}.  Multiplication of the latter term by $k^3$ followed by dyadic summation in $k$ yields
$$
N_{just-rich} \ll n^{\frac{11}{4}}  k_*^{-\frac{3}{4}}\,,$$ optimising with $N_{poor}\leq n^2k_*$ justifies the choice of $k_*=n^{\frac{3}{7}}$, and proves \eqref{thin}. Together with the better bound \eqref{e:vr} this completes the proof of Theorem \ref{l:corners}.
\end{proof}

\subsection{Discrete sphere} We now consider the same problem on the discrete sphere $\mathcal S^{d-1}_t$, $t\neq 0$. The results here have not appeared elsewhere, however, they are somewhat similar to the discrete paraboloid case that has been adopted from \cite{RS}. They are not identical though, because the geometries of the two quadrics are different.

First off, if $x,y,z \in S^{d-1}_t$, then for 
$x+y-z$ also to lie in $S^{d-1}_t$ one needs
\begin{equation}
0=2t + 2x\cdot(y-z)-2y\cdot z= 2(z-x)\cdot(z-y)\,,
\label{f:inc_d}\end{equation}
so the energy count on $S^{d-1}_t$ is once again, the count of rectangles with vertices in $A\subseteq S^{d-1}_t$, the rectangles themselves living in $F^{d}$ rather than $F^{d-1}$ as it was in the paraboloid case, but instead constrained by the fact that their vertices lie in $S^{d-1}_t$.

We now consider $d=3,4$ and present the analogues of the results in the preceding section in reverse formation, starting from $d=3$.

\subsubsection{The case $d=3$.} The main result in this section is the following energy estimate, which is slightly worse than the one in Theorem \ref{l:corners}.
\begin{theorem}\label{s_corners}
Let $A\subseteq \mathcal{S}_t^2,$ with at most $k_0$ points on an isotropic line. Then 
\begin{equation}\label{f:E2s}
\E(A) \;\ll \;|A|k_0^2 + \left\{ \begin{array}{lll} |A|^{\frac{37}{15}}\,, &for & |A|< p^{\frac{15}{11}}\,,\\ \hfill \\
 \frac{|A|^3}{p}  + |A|^2\sqrt{p}\,.\end{array}\right.
\end{equation}\end{theorem}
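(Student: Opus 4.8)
The plan is to follow the architecture of the proof of Theorem~\ref{l:corners}: reduce $\E(A)$ to a count of right angles via the orthogonality criterion \eqref{f:inc_d}, and then feed this count into the Stevens--de Zeeuw incidence bound \eqref{ins}. The one genuinely new ingredient is a device converting the three-dimensional orthogonality relation on $\mathcal S^2_t$ into a planar point--line incidence problem. At the outset I discard the degenerate quadruples: the $O(|A|^2)$ ones with a coincidence among $x,y,z,u$ are negligible, and by Lemma~\ref{easy} a positive proportion of pairs of $A$ are non-null unless $\Omega(|A|)$ points lie on an isotropic line. The purely degenerate rectangles, whose four vertices share one isotropic line, contribute at most $|A|k_0^2$, since the energy carried by the $\le k_0$ points of one such line is $\ll k_0^3$ while there are $\ll |A|/k_0$ such lines; so we may focus on the number $N$ of ordinary right angles, those with both legs non-isotropic.

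For the main term I fix an apex $z\in A$ and parametrise $\mathcal S^2_t\setminus\{z\}$ by the secant lines through $z$. Writing $u=x-z$, the requirement $x\in\mathcal S^2_t$ becomes $\|u\|^2+2z\cdot u=0$, so every projective direction $[u]\in\Pro^2$ with $\|u\|^2\neq 0$ meets $\mathcal S^2_t$ in exactly one further point; this is a stereographic-type injection of the non-isotropic part of $A\setminus\{z\}$ onto a set $D_z\subset\Pro^2$ of at most $|A|$ directions. The right-angle condition $(x-z)\cdot(y-z)=0$ is bihomogeneous of degree one in $u_x,u_y$ and hence descends to the polarity of the form $\|\cdot\|^2$, namely $[u_y]$ lies on the polar line of $[u_x]$. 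Consequently the \emph{ordinary} right angles with apex $z$ are exactly the incidences $I(D_z,\mathcal L_z)$ between $D_z$ and the set $\mathcal L_z$ of polar lines of its points, a planar configuration with $|D_z|,|\mathcal L_z|\le|A|$.

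Dehomogenising to $F^2$ and applying \eqref{ins} yields $I(D_z,\mathcal L_z)\ll(|A|^2)^{11/15}+|A|=|A|^{22/15}$, the constraint $|Q|^{13}|L|^{-2}<p^{15}$ of Theorem~\ref{SdZgen} reading here as $|A|^{11}<p^{15}$, i.e.\ $|A|<p^{15/11}$. Summing over the $|A|$ choices of apex gives $N\ll|A|\cdot|A|^{22/15}=|A|^{37/15}$; the single extra summation over apices, which has no counterpart in the paraboloid case where the planar projection $\u A$ is fixed once and for all, is precisely what degrades the exponent from $\tfrac{17}{7}$ to $\tfrac{37}{15}$. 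The complementary large-set estimate $\tfrac{|A|^3}{p}+|A|^2\sqrt p$ is the universal bound \eqref{uni}, obtained by running the very same per-apex count through Vinh's inequality \eqref{insv} and summing. Collecting the contributions, $\E(A)\ll N+|A|k_0^2$ gives \eqref{f:E2s}.

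The step I expect to demand the most care is the bookkeeping of the isotropic degeneracies in positive characteristic, which is where the sphere differs from the paraboloid. One must check that the secant parametrisation is injective off the isotropic cone -- two distinct points of $A$ on a common secant through $z$ would force three collinear points of $\mathcal S^2_t$, hence an isotropic line on the sphere, governed by Lemma~\ref{l:null} and the $k_0$ term -- and that $\mathcal L_z$ consists of $\gg|A|$ distinct lines, so that \eqref{ins} is not applied to a degenerate configuration. The crux is the \emph{semi-degenerate} rectangles, in which exactly one leg $u=x-z$ at the apex is isotropic: then $x$ lies on one of the at most two isotropic lines of $\mathcal S^2_t$ through $z$, while the perpendicular plane $z+u^\perp$, already containing that isotropic line, cuts $\mathcal S^2_t$ in a reducible conic, i.e.\ again in isotropic lines; thus $y$ too is confined to $O(k_0)$ points, and these rectangles contribute only $O(|A|k_0^2)$, folding into the stated term. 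Verifying this reduction, together with the absence of fully isotropic planes in $F^3$ (Section~\ref{null}), is the technical heart of the argument.
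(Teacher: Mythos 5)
Your proposal is correct and follows essentially the same route as the paper: after folding the degenerate and semi-degenerate (isotropic-sided) rectangles into the $|A|k_0^2$ term, you fix an apex $z$, pass to the projective plane of directions through $z$, and count right angles as point--line incidences between the directions and their polars -- which is exactly the Appelbaum--Sharir device at the plane at infinity that the paper invokes -- then apply Theorem~\ref{SdZgen} per apex (with the constraint $|A|^{11}<p^{15}$) and sum, complemented by Vinh's bound \eqref{insv} for large $A$. The exponent $\frac{37}{15}$, the threshold $p^{15/11}$, and the treatment of the isotropic rulings all match the paper's argument.
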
	
\begin{proof}
Observe that by Lemma \ref{l:null}, the sphere  $\mathcal{S}_t^2$ contains isotropic lines on only if $-t$ is  square in $F$. In this case, since $\mathcal{S}_t^2$ is doubly ruled, the contribution of fully degenerate rectangles into $\E(A)$ is at most $|A|k_0^2$. Moreover, consider an isotropic line $l \subset \mathcal{S}_t^2$ and some $x\in l$. Then $x$ must be orthogonal to $l$, and therefore the plane, containing the origin and $l$ is the semi-isotropic plane $l^\perp$. This plane will meet also $\mathcal{S}_t^2$ at another line $l^{\|}$, parallel to $l$ and the total number of semi-degenerate rectangles with two vertices on $l$ and two on $l^{\|}$, over all isotropic lines in $\mathcal{S}_t^2$ is again $O(|A|k_0^2)$. (In other words, the only plane that may intersect $\mathcal{S}_t^2$ at two parallel isotropic lines and allow for an orthogonal direction is the semi-isotropic plane through the origin that contains both lines.)

After that we may assume that  $\mathcal{S}_t^2$ contain no lines and apply Theorem \ref{SdZgen} in a way the Szemer\'edi-Trotter theorem was used by Appelbaum and Sharir \cite{AS1}. One fixes $z\in A$ and counts the maximum number of right triangles in $A$ with the vertex $z$. There are $|A|-1$ distinct directions from $z$ to other points of $A$, and we  assign to each direction a point-line pair at the plane at infinity. The point is the ideal point in the direction $zx$, and the line -- ideal points in the directions normal to $zx$. Having the right triangle $xzy$ means an incidence between the ideal point, corresponding to $y$ and a line, corresponding to $x$, as well as the other way around. The claim of Theorem \ref{s_corners} then follows after an application of Theorem \ref{SdZgen} for small $A$, complemented by estimate \eqref{insv}.
\end{proof}

\subsubsection{The case $d=4$} We finally give the cousin of Theorem \ref{l:par_energy}. 
\begin{theorem}
\label{l:sph_energy} 
	Let $A\subseteq \mathcal{S}^3_t$, $t\neq 0$. 
	Then
\begin{equation}\label{f:E_sph_lemma}
\E(A) \ll \frac{|A|^3}{p} + |A|^{\frac{5}{2}} + |A|k_0^2 + |A|^2k_0 \,.
\end{equation}
where $k_0$ is the maximum number of points of $A$ on an isotropic line.	
\end{theorem}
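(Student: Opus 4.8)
The plan is to follow the template of Theorem~\ref{l:par_energy}, converting the energy into a rectangle count via the orthogonality relation \eqref{f:inc_d} and then feeding it into the point-plane bound, while isolating the contribution of isotropic configurations, which in $F^4$ behave differently than on the paraboloid. Since $\E(A)$ is comparable to the number of rectangles with vertices in $A\subseteq\mathcal{S}^3_t$, and each rectangle produces at each of its four corners a triple $(x,y,z)$ obeying $(z-x)\cdot(z-y)=0$, it suffices to bound the number of such triples. I would read this as a point-plane incidence problem: the point is $x\in A$, and to each ordered pair $(z,y)$ with $z\neq y$ I associate the affine plane $\Pi_{z,y}$ through $z$ with normal vector $y-z$, so that $x\in\Pi_{z,y}$ is exactly \eqref{f:inc_d}. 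This produces a point set $Q=A$ of size $|A|$ and a multiset $\Pi$ of at most $|A|^2$ planes.

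The main terms should come straight out of Theorem~\ref{Misha+}. The one structural input I would use is that any line carrying three or more points of $A$ must be contained in the quadric $\mathcal{S}^3_t$ and is therefore isotropic; hence the maximal number of collinear points of $Q$ is at most $k_0$ (or $2$). Before applying the theorem one must check that $\Pi$ has $\Omega(|A|)$ distinct members, so that the linearity of \eqref{pups} in $|\Pi|$ applies; this follows from the pigeonhole principle together with Lemma~\ref{easy}, unless a positive proportion of $A$ lies on a single isotropic line, in which case I would peel that part off iteratively, each peeling being charged to the term $|A|k_0^2$. With this in hand Theorem~\ref{Misha+} gives $\E(A)\ll |A|^3/p + |A|^2(\sqrt{|A|}+k_0) = |A|^3/p + |A|^{5/2} + |A|^2 k_0$, which already reproduces \eqref{f:E_sph_lemma} up to the (dominated) term $|A|k_0^2$.

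To make the geometry transparent and to see where the extra term $|A|^2 k_0$, absent on the paraboloid, genuinely comes from, I would classify the rectangles. Ordinary rectangles, having no isotropic side, are controlled by the incidence bound with the rich isotropic lines excluded, giving $\ll |A|^3/p+|A|^{5/2}$. Degenerate rectangles have all four vertices on one isotropic line and contribute $\ll |A|k_0^2$. The new feature is the semi-degenerate rectangles, whose two isotropic sides lie, by Remark~\ref{3sph}, on two parallel isotropic lines of the cylinder $l^\perp\cap\mathcal{S}^3_t$. I would count these by fixing one of the two non-isotropic sides, that is a pair $(a,b)\in A\times A$ with $b-a$ non-isotropic ($\le |A|^2$ choices); the common direction $u$ of the two isotropic sides must then be isotropic and orthogonal to both $a$ and $b$ (so that $a+su$ and $b+su$ stay on $\mathcal{S}^3_t$), whence $u$ runs over at most two directions of the two-plane $a^\perp\cap b^\perp$; finally the shift $s$ is confined by $a+su,b+su\in A$ to at most $k_0$ values. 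This bounds the number of semi-degenerate rectangles by $\ll |A|^2 k_0$, and summing the three contributions gives \eqref{f:E_sph_lemma}.

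The main obstacle, and the only place where the argument departs from the paraboloid, is precisely this semi-degenerate count. On the paraboloid in $F^3$ the analogous rectangles were absorbed into the rich non-isotropic line term, but on $\mathcal{S}^3_t$ every rich line is isotropic and $F^4$ admits mutually orthogonal isotropic directions, so the cylinder structure of Remark~\ref{3sph} is unavoidable. Care is also needed in the bookkeeping of the incidence step: excluding the rich isotropic lines via the $k^*$-version of Theorem~\ref{Misha+} does not automatically discard every labelling of a semi-degenerate rectangle, since each relevant plane meets $\mathcal{S}^3_t$ in a second, possibly poor, isotropic line. For this reason the cleanest route is to bound all corners directly with $k\le k_0$ and dispose of the $k_0$-terms as above, rather than to insist on a perfectly disjoint decomposition.
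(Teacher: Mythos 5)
Your reduction of the main term is where the argument breaks down. The orthogonality relation \eqref{f:inc_d} lives in $F^4$: the locus $\{x:(x-z)\cdot(y-z)=0\}$ is a \emph{hyperplane} of $F^4$, so your sets $Q=A$ and $\Pi=\{\Pi_{z,y}\}$ form a point--hyperplane configuration in four dimensions, to which Theorem \ref{Misha+} (a statement about points and planes in $\F_p^3$, resp.\ $\Pro^3$) does not apply. This is precisely where the sphere differs from the paraboloid: for $\P^3$ the rectangle condition \eqref{f:inc_d-1} concerns the horizontal projections and genuinely lives in $F^3$, whereas for $\mathcal S^3_t$ there is no such projection. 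Nor can one hope for a four-dimensional analogue of \eqref{pups} with $k$ equal to the maximum number of \emph{collinear} points: the degenerate configurations for point--hyperplane incidences in $\Pro^4$ concentrate on two-flats (all hyperplanes through a fixed two-plane containing all the points), and a two-plane meets $\mathcal S^3_t$ in a conic that can carry far more than $k_0$ points of $A$. The paper's way around this is to fix the corner $z\in A$ and projectivise the directions $x-z$, $y-z$ in the hyperplane at infinity, which \emph{is} a $\Pro^3$; Theorem \ref{Misha+} is then applied once for each $z$, to $|Q_z|\le |A|$ ideal points and a multiset of $|A|$ ideal planes, and the result summed over $z$.

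Once the count is set up per vertex, a second issue appears that your classification skips over: the rich lines in the $\Pro^3$ at infinity correspond not only to isotropic lines of $\mathcal S^3_t$ but to \emph{plane conic sections} of $\mathcal S^3_t$ through $z$, each of which may support $\gg\sqrt{|A|}$ points of $A$. Excluding the conics with $\geq 10\sqrt{|A|}$ points as forbidden lines leaves the rectangles having a non-isotropic side on a rich conic, and these need a separate argument (as in the proof of Theorem \ref{l:par_energy}): fixing a diagonal $xy$ of such a rectangle, the admissible vertices $z$ on the conic lie on a line in the plane of the conic, hence number at most two, giving $\E(A,A_c)\ll|A||A_c|$ and, after Cauchy--Schwarz over the $O(\sqrt{|A|})$ rich conics, a total of $O(|A|^{\frac{5}{2}})$. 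Your treatment of the degenerate and semi-degenerate rectangles, by contrast, is sound -- counting the semi-degenerate ones via the at most two isotropic directions in $a^\perp\cap b^\perp$ and the $\le k_0$ admissible shifts is a clean route to the $|A|^2k_0$ term, essentially equivalent to the paper's use of semi-isotropic planes and Remark \ref{3sph} -- but it does not repair the main term.
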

Note that in view of Remark \ref{3sph} the last term $|A|^2k_0$ in estimate \eqref{f:E_sph_lemma}, which appears in addition to the estimate of Theorem \ref{l:par_energy} for $\P^3$, is unavoidable (as is the penultimate one, for the same reason as in Theorem \ref{l:par_energy}). Indeed, one can take some isotropic line $l\subset \mathcal S^3$ and consider the cylinder $l^\perp \cap\mathcal S^3$, in which one arranges $k_0$ points on each of $\frac{|A|}{k_0}$ lines, parallel to $l$. Then any parallelogram with two vertices on one line and two on another is a rectangle, so the number of the rectangles can be $\gg \left(\frac{|A|}{k_0}\right)^2 \cdot k^3$.

\begin{proof}
The aim of the proof is to give an upper bound on the number of rectangles $xzyu$, with vertices in $A\subset \mathcal S^3_t$. Let us split the rectangles into two main types. The first type is when a rectangle contains a side, say $zx$, which as a line is non-isotropic, in particular this line intersects $\mathcal S^3_t$ only at $\{x,z\}$. Otherwise the rectangle is of the second type. 

Following the notation in the proof of Theorem \ref{l:par_energy}, let $\E_3$ be the number of rectangles of the second type. This, according to Remark \ref{3sph}, means that the rectangle is degenerate, that is all its four vertices lie on a single isotropic line. To bound the number of degenerate rectangles we only need a trivial estimate for the number of collinear triples in $A$:
\begin{equation}\label{e3sph}
\E_3\ll |A|^2k_0 + |A|k_0^2\,.\end{equation}

The rest of the proof deals with rectangles of the first type. It follows the same strategy of counting right angles from each point $z\in A$ to pairs of other points of $A$ as in the proof of Theorem \ref{f:inc_d-1}. Each fixed $z\in A$ gives rise to a point-plane incidence problem in the hyperplane at infinity, set up in exactly the same way as the proof of Theorem  \ref{s_corners} sets up a point-line incidence count in the plane at infinity. 

More precisely, for a fixed $z$ define the set $Q_z$ of ideal points, corresponding to non-isotropic lines $zx$. The number of first type rectangles with a vertex at $z$ is bounded in terms of the number of incidences of $Q_z$ with a multiset of planes of total weight $\sim|A|$, the number of distinct planes, owing to Lemma \ref{easy} (unless a large proportion of $A$ lies on a single isotropic line) being $\gg|Q_z|$.

By Theorem \ref{Misha+} the number of incidences is for each $z$ is \begin{equation}\label{intsph}
O\left( \frac{|A|^2}{p} + |A|^{\frac{3}{2}} + k|A| \right) \,,
\end{equation}
where $k$ is the maximum number of collinear points,  that is the maximum number of points of $A$ in the intersection of  $\mathcal{S}^3_t$ with a non-isotropic plane through $z$. If the plane is semi-isotropic, meeting $\mathcal S^3_t$ in two parallel isotropic lines, we have a bound $k\leq k_0$.

If the plane is ordinary, then its intersection with $\mathcal{S}^3_t$ is either two isotropic lines meeting at $z$, in which case they cannot be mutually orthogonal, which is irrelevant for the count of first type rectangles. Otherwise it is a conic via $z$. Let us show that in this case, as in the proof of  Theorem \ref{l:par_energy}, we can effectively set $k=\sqrt{|A|}.$

Following the proof of Theorem \ref{l:par_energy}, let $L'$ be the set of plane conic curves on $\mathcal S^3_t$, supporting $\geq 10\sqrt{|A|}$ points of $A$ each, and $A'$ the part of $A$ supported on the union of conics in $L'$. By inclusion-exclusion principle $|L'|\ll\sqrt{|A|}.$ Now, in the application of Theorem \ref{Misha+} with each fixed $z$, we forbid the set of lines arising at infinity after projecting the conics in $L'$, containing $z$. For this restricted number of incidences one has $k=k_0$ in estimate \eqref{intsph}.

This means that the quantity
\begin{equation}\label{e1sph}
\E_1 := O\left( \frac{|A|^3}{p} + |A|^{\frac{5}{2}} +|A|^2k_0\right)
\end{equation}
bounds the number of first type rectangles, such that one of the lines, containing a non-null side of the rectangle supports $O(\sqrt{|A|})$ points of $A$.

To complement the latter bound, let $\E_2$ be the total number of the first type rectangles that remain.
Similar to the proof of Theorem \ref{l:par_energy}, one has $\E_2\ll \E(A,A')$. Let $A_c$ be the subset of $A$ supported on one such conic. If we show that   
$$
\E(A,A_c)\ll |A||A_c|,
$$
then we are done by repeating the Cauchy-Schwarz estimate \eqref{f:E_par2}.

But this follows immediately, since $L'$ is the set of plane conics, rather than isotropic lines. Suppose $x$ is a vertex of a rectangle on the conic, $y\in A$, $xy$ being the diagonal of the rectangle, and one is looking for $z$ on the conic, so that
$$
(x-z)\cdot(z-y)=0,
$$
Let $\pi$ be the plane containing the conic. If there were three distinct solutions $z_1,z_2,z_3$ of the latter equation, they would be collinear in $\pi$, but a line may meet a conic at $\leq 2$ points.

Thus $\E_2\ll |A|^{\frac{5}{2}}$: this plus \eqref{e1sph} and \eqref{e3sph} completes the proof of 
 of Theorem \ref{l:sph_energy}.
\end{proof}

\end{document}